\tikzset{join/.code=\tikzset{after node path={%
\ifx\tikzchainprevious\pgfutil@empty\else(\tikzchainprevious)%
edge[every join]#1(\tikzchaincurrent)\fi}}}
\tikzset{>=stealth',every on chain/.append style={join},
         every join/.style={->}}
\tikzstyle{labeled}=[execute at begin node=$\scriptstyle,
\renewcommand{\vec}[1]{\mbox{\boldmath$#1$}}
\newcommand{\blue}{\textcolor{black}}
\newcommand{\bel}[1]{\begin{equation}\label{#1}}
\newcommand{\be}{\begin{equation}}
\newcommand{\sign}{\ensuremath{\mathrm{sign}}}
\newcommand{\sgn}{\ensuremath{\mathrm{Sgn}}}
\newcommand{\argmax}{\ensuremath{\mathrm{argmax\,}}}
\newcommand{\Hmm}[1]{\leavevmode{\marginpar{\tiny%
$\hbox to 0mm{\hspace*{-0.5mm}$\leftarrow$\hss}%
\vcenter{\vrule depth 0.1mm height 0.1mm width \the\marginparwidth}%
\hbox to
0mm{\hss$\rightarrow$\hspace*{-0.5mm}}$\\\relax\raggedright #1}}}
\newtheorem{theorem}{Theorem}[section]
\newtheorem{lemma}[theorem]{Lemma}
\newtheorem{cor}[theorem]{Corollary}
\newtheorem{remark}[theorem]{Remark}
\newcommand{\norm}[1]{\Vert #1\Vert_\infty}
\newcommand{\set}[1]{\{ #1\}}
\newcommand{\abs}[1]{\vert #1\vert}
\DeclareMathOperator*{\argmin}{\ensuremath{arg\,min}}
\DeclareMathOperator*{\cut}{\ensuremath{cut}}
\DeclareMathOperator*{\mean}{\ensuremath{mean}}
\begin{document}
 \title{A simple iterative algorithm for maxcut}

\author{Sihong Shao}
\email{sihong@math.pku.edu.cn}

\address{CAPT, LMAM and School of Mathematical Sciences,  Peking University, Beijing 100871, China}

\author{Dong Zhang}
\email{dongzhang@math.pku.edu.cn}

\address{LMAM and School of Mathematical Sciences,  Peking University, Beijing 100871, China}

\author{Weixi Zhang}
\email{zjwzrazwx@gmail.com}

\address{LMAM and School of Mathematical Sciences,  Peking University, Beijing 100871, China}

\begin{abstract}
We propose a simple iterative (SI) algorithm for the maxcut problem through fully using an equivalent continuous formulation. It does not need rounding at all and has advantages that all subproblems have explicit analytic solutions, the cut values are monotonically updated and the iteration points converge to a local optima in finite steps via an appropriate subgradient selection. Numerical experiments on G-set demonstrate the performance. In particular, the ratios between the best cut values achieved by SI and {those by some advanced combinatorial algorithms in [Ann.~Oper.~Res.~248 (2017) 365]} are at least  $0.986$ and can be further improved to at least $0.997$ by a preliminary attempt to break out of local optima.
\end{abstract}
 \maketitle
       \makeatletter
\@addtoreset{equation}{section}
\makeatother   
          
\tableofcontents
Keywords: 
Maxcut; 
Iterative algorithm; 
Exact solution;
Subgradient selection;
Fractional programming


Mathematics Subject Classification: 90C27; 05C85; 65K10; 90C26; 90C32


 \section{Introduction}
 \label{intro}

Given an undirected simple graph $G=(V,E)$ of order $n$ with the vertex
set $V$ and the edge set $E$,
a set pair $(S,S^\prime)$ is called a cut of $G$ if
$S\cap S^\prime = \varnothing$ and $S \cup S^\prime = V$.
The maxcut problem,
one of Karp's $21$ NP-complete problems \cite{Karp1972},
aims at finding a specific cut $(S,S^\prime)$ of $G$ to maximize
the cut value
\begin{equation}\label{eq:cut}
\cut(S)=\sum\limits_{\{i,j\}\in E(S,S^\prime)}w_{ij},
\end{equation}
where $E(S,S^\prime)$ collects all edges cross between $S$ and $S^\prime$ in $E$, 
and  $w_{ij}$ denotes the nonnegative weight on the edge $\{i,j\}\in E$.

Due to its widespread applications in various areas 
\cite{ChangDu1987,BarahonaGrotschelJungerReinelt1988,AizenbudShkolnisky2016},
several maxcut algorithms have been proposed to
search for approximate solutions and usually fall into two distinct categories:
discrete algorithms and continuous ones.
The former mainly refer to the combinatorial algorithms for maxcut,
which directly deal with the discrete objective function \eqref{eq:cut}
and usually adopt both complicated techniques to break out of local optima
and advanced heuristics 
for improving the solution quality,
such as the scatter search \cite{MartiDuarteLaguna2009}, the tabu search \cite{PalubeckisKrivickiene2004} and hybrid strategies within the framework of evolutionary algorithms \cite{LinGuan2016,MaHao2017}. In contrast, the objective functions for the latter,
often obtained from the relaxation of the discrete objective function \eqref{eq:cut},
are continuous, and thus standard continuous optimization algorithms can be applied into the relaxed problems in a straightforward manner, for instance, the Goemans-Williamson (GW) algorithm \cite{GoemansWilliamson1995},
{the CirCut algorithm resulted from the rank-two relaxation heuristics \cite{BurerMonteiroZhang2001},}
the spectral cut (SC) algorithm \cite{DelormePoljak1993,PoljakRendl1995}
and its recursive implementation (RSC) \cite{Trevisan2012,th:Ottaviano2008,ChangShaoZhang2016-maxcut}.
For all these continuous algorithms, rounding is essential and indispensable 
in obtaining a cut from a solution of the corresponding relaxed problem. 
The GW algorithm rounds the solution of a relaxing semidefinite programming via randomly selecting the hyperplanes until it achieves an expected cut value. 
{A deterministic strategy, named Procedure-CUT,  is adopted by CirCut to round the angle-vector solution to get a
best possible  associated cut.}
The SC algorithm obtains a cut by rounding the maximal eigenvector of graph Laplacian by a threshold, while the RSC algorithm recursively distributes part of unabsorbed points into two sets corresponding to a cut where the selection and assignment are determined by rounding the approximate solution 
of the dual Cheeger cut problem.





In this work, we propose a novel continuous algorithm for the maxcut problem, i.e., a simple iterative (SI) algorithm. Compared with the above-mentioned continuous maxcut algorithms, the proposed SI algorithm has the following distinct advantages. First, our inner subproblem can be solved analytically (see Theorem \ref{Thm:exact_solution}),
whereas no matter the GW algorithm or the RSC algorithm needs call other optimization solvers for the inner subproblems.
This constitutes the main reason why we use the adjunct word {\em simple} for the proposed algorithm.
Second, our continuous optimization problem is directly equivalent to the maxcut problem (see Theorem~\ref{thm:zero}), 
and the corresponding cut is updated in an iterative manner and converges to the local maximum (see Theorem~\ref{thm:conver_3}). That is, the SI algorithm does not need any rounding at all. 
Finally, as an iterative algorithm,  SI may select the cut by  SC  to be the initial point (see Section~\ref{sec:Numer-exper}). In other words, it can also be used to improve the quality of the solution obtained from any other algorithms. 


The rest is organized as follows. Section \ref{sec:equiv-conti} establishes 
an equivalent continuous formulation of the maxcut problem \eqref{eq:cut} and 
a Dinkelbach-type iterative algorithm with global convergence. 
However, the solvability of the related inner subproblem can not be assured due to both 
the NP-hardness and the lack of convexity. 
To this end, in Section \ref{sec:iter-scheme}, we propose our simple iterative algorithm with an analytical solution to the inner problem. Both cost analysis and quality check are performed through numerical experiments on G-set in Section \ref{sec:Numer-exper}.
Besides, in order to further improve the quality, a preliminary attempt to break out of local optima 
is also implemented there.
We are concluded with a few remarks in Section \ref{sec:conclusion}.

\bigskip

{\bf Acknowledgements.}
This work was supported by the National Key R~\&~D Program of China (Nos.~2020AAA0105200, 2022YFA1005102),  the National Natural Science Foundation of China (Nos.~12288101, 11822102) and China Postdoctoral Science Foundation (No.~BX201700009). SS is partially supported by Beijing Academy of Artificial Intelligence (BAAI). The authors would like to thank Professor Kung-Ching Chang 
for his long-term guidance, encouragement and support 
in mathematics, and useful comments on an earlier version of this paper. 
{The authors also thank Chuan Yang for her kind help in the comparison with CirCut as well as the anonymous referees for their valuable suggestions.}


\section{Equivalent continuous problems}
\label{sec:equiv-conti}


Given an undirected graph $G=(V,E)$ with nonnegative weights, 
let 
\begin{align}
\label{eq:I(x)}
I(\vec x)&=\sum_{\{i,j\}\in E} w_{ij}|x_i-x_j|,\\
\|\vec x\|_\infty & =\max\{|x_1|,\ldots,|x_n|\}, \label{eq:inf}\\
F(\vec x) &= \frac{I(\vec x)}{\norm{\vec x}}. \label{eq:F(x)}
\end{align}
It can be readily verified that 
the nonnegative function $F(\vec x)$ can achieve its maximum on $\mathbb{R}^n\setminus\{\vec 0\}$ provided by its homogeneity of degree zero. 

Let
\begin{align}
S^\pm(\vec x) & =\{i\in V:x_i=\pm \|\vec x\|_\infty\},  \label{eq:S+-}\\
S^<(\vec x) & =\{i\in V:|x_i|<\|\vec x\|_\infty\}. \label{eq:S0}
\end{align}
Then for any $\vec x^*\in \mathbb{R}^n \setminus \{\vec0\}$, we have 
\begin{equation}\label{eq:Mc}
M(\vec x^*):= \{\vec x\,\,\,|\,\,\,\|\vec x\|_\infty=\|\vec x^*\|_\infty,\;\;\; S^\pm(\vec x^*)\subset S^\pm(\vec x)\}
\end{equation}
is a convex polytope. In fact, the convexity of $I(\vec x)$ directly implies that, 
if $\vec x^*\in M(\vec x^*)$ is a maximizer of $F(\vec x)$ on $\mathbb{R}^n\setminus\{\vec 0\}$, so does any $\vec x\in M(\vec x^*)$.

For any nonempty subset $S\subset V$, 
we define an indicative vector $\vec 1_S$: 
\[
(\vec1_S)_i=\left\{\begin{array} {l}
1,\,\,\,\,\,\,\,\,\,\,\,i\in S,\\
0,\,\,\,\,\,\,\,\,\,\,\,i\notin S,
\end{array}\right.
\]
and then 
\[
\vec x = \vec 1_S -\vec 1_{V\setminus S}, 
\]
which satisfies
\begin{equation}\label{eq:FeqC0}
\frac{1}{2}F(\vec x) =\frac12 I(\vec x)=\sum_{\{i,j\}\in E}w_{ij} \frac{|x_i-x_j|}{2}=\cut(S).
\end{equation}


\begin{theorem}
The maxcut problem \eqref{eq:cut} can be rewritten into 
\label{thm:zero}
\begin{equation}\label{eq:maxcut-continuous}
\max_{S\subset V} \cut(S) = \frac12 \max\limits_{\vec x\in\mathbb{R}^n \setminus \{\vec0\}}F(\vec x),
\end{equation}
and any vector $\vec x^*$ reaching the maximum of $F(\vec x)$ produces a maxcut $(S,S^\prime)$ where 
the subset $S$ satisfies $ S^+(\vec x^*)\subset S\subset (S^-(\vec x^*))^c$. 
\end{theorem}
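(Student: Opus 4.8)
The plan is to prove the two directions of the identity \eqref{eq:maxcut-continuous} separately, together with the structural claim on the maximizer.

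First I would establish the inequality $\frac12 \max_{\vec x\neq\vec 0}F(\vec x)\ge \max_{S\subset V}\cut(S)$. This is the easy direction: given any optimal cut $(S,S^\prime)$ of $G$, form the $\pm1$ vector $\vec x=\vec 1_S-\vec 1_{V\setminus S}$, which is nonzero since $V\neq\varnothing$, and invoke \eqref{eq:FeqC0} to get $\frac12 F(\vec x)=\cut(S)$. Hence the continuous maximum is at least the combinatorial one.

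Next I would establish the reverse inequality $\frac12 \max_{\vec x\neq\vec 0}F(\vec x)\le \max_{S\subset V}\cut(S)$, which is the crux of the argument. Let $\vec x^*$ be a maximizer of $F$ on $\mathbb{R}^n\setminus\{\vec 0\}$ (existence is guaranteed by the degree-zero homogeneity remark preceding the theorem, so one may normalize $\|\vec x^*\|_\infty=1$ and compactify). The key observation is that $F(\vec x)=I(\vec x)/\|\vec x\|_\infty$ restricted to the polytope $M(\vec x^*)$ defined in \eqref{eq:Mc} is, up to the fixed positive denominator $\|\vec x^*\|_\infty$, the convex function $I$; since a convex function on a polytope attains its maximum at a vertex, and every vertex of $M(\vec x^*)$ has all coordinates in $\{-\|\vec x^*\|_\infty,0,\|\vec x^*\|_\infty\}$ — actually I would argue one can push each coordinate in $S^<(\vec x^*)$ to $\pm\|\vec x^*\|_\infty$ without decreasing $I$, because $x_i\mapsto I(\vec x)$ is convex and piecewise linear in each coordinate — we may assume $\vec x^*$ itself is a $\pm\|\vec x^*\|_\infty$-valued vector. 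After rescaling to a $\pm1$ vector, set $S=S^+(\vec x^*)$; then \eqref{eq:FeqC0} gives $\cut(S)=\frac12 F(\vec x^*)$, so the combinatorial maximum is at least the continuous one. Combining the two inequalities yields \eqref{eq:maxcut-continuous}.

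Finally, for the structural statement: if $\vec x^*$ attains the maximum of $F$, the remark after \eqref{eq:Mc} says every $\vec x\in M(\vec x^*)$ is also a maximizer, and by the convexity/vertex argument above there is a vertex $\vec y$ of $M(\vec x^*)$ that is a $\pm\|\vec x^*\|_\infty$-valued maximizer with $S^+(\vec x^*)\subset S^+(\vec y)$ and $S^-(\vec x^*)\subset S^-(\vec y)$, equivalently $S^+(\vec x^*)\subset S^+(\vec y)\subset (S^-(\vec x^*))^c$. Taking $S=S^+(\vec y)$ and using $\frac12 F(\vec y)=\frac12 F(\vec x^*)=\frac12\max F=\max_{S}\cut(S)$ shows $(S,S^\prime)$ is a maxcut with $S^+(\vec x^*)\subset S\subset (S^-(\vec x^*))^c$, as claimed. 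I expect the main obstacle to be making the "convex function attains its max at a $\pm$-valued vertex of $M(\vec x^*)$" step fully rigorous — in particular verifying that coordinatewise convexity of $I$ lets one move each free coordinate to one of the two extreme values while keeping $\|\vec x\|_\infty$ fixed and not decreasing $I$, and checking the edge cases where $I$ is constant in some coordinate (in which case either choice works and the inclusion $S^+(\vec x^*)\subset S$ is still respected).
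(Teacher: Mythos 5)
Your proposal is correct and follows essentially the same route as the paper: the easy inequality comes from \eqref{eq:FeqC0}, and the converse uses the convexity of $I$ on the polytope $M(\vec x^*)$ of \eqref{eq:Mc}, on which $\|\cdot\|_\infty$ is constant, to pass from an arbitrary maximizer to a $\pm\|\vec x^*\|_\infty$-valued one. The only (cosmetic) difference is that the paper invokes its remark that every point of $M(\vec x^*)$ is a maximizer to conclude that \emph{every} $S^*$ with $S^+(\vec x^*)\subset S^*\subset (S^-(\vec x^*))^c$ is a maxcut, whereas you push coordinates to a single vertex (correctly noting along the way that such vertices are $\pm\|\vec x^*\|_\infty$-valued rather than containing zero entries); since you also cite that remark, your argument yields the full statement as well.
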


\begin{proof}
Combining $\cut(\varnothing)=0$ and Eq.~\eqref{eq:FeqC0} for the nonempty set situation  together leads directly to
\begin{equation}\label{eq:FgeC}
\frac12 \max\limits_{\vec x\in\mathbb{R}^n \setminus \{\vec0\}}F(\vec x)\ge\max_{S\subset V} \cut(S).
\end{equation}



On the other hand, suppose $\vec x^*$ is a maximizer of $F(\vec x)$ on $\mathbb{R}^n\setminus\{\vec 0\}$, i.e.,
\begin{equation}
\frac12 F({\vec x^*})= \frac12 \max\limits_{\vec x\in\mathbb{R}^n \setminus \{\vec0\}}F(\vec x).
\end{equation} 
For any $S^*$ satisfying $ S^+(\vec x^*)\subset S^*\subset (S^-(\vec x^*))^c$, there exists $\hat{\vec x}\in M(\vec x^*)$ defined in Eq.~\eqref{eq:Mc} such that
\begin{equation}
\frac{\hat{\vec x}}{\norm{\hat{\vec x}}} = \vec 1_{S^*}-\vec 1_{V\setminus S^*}
\end{equation}
also maximizes $F(\vec x)$ on $\mathbb{R}^n\setminus\{\vec 0\}$ thanks to the zeroth order homogeneousness of $F(\vec x)$.
That is, we have
\begin{equation}\label{eq:FleC}
\frac12 \max\limits_{\vec x\in\mathbb{R}^n \setminus \{\vec0\}}F(\vec x)=\frac12 F(\frac{\hat{\vec x}}{\norm{\hat{\vec x}}})=\cut(S^*) \le\max_{S\subset V} \cut(S),
\end{equation}
where Eq.~\eqref{eq:FeqC0} has been applied. 

The proof is finished as a result of Eqs.~\eqref{eq:FgeC} and \eqref{eq:FleC}.
\end{proof}

Theorem \ref{thm:zero} establishes an equivalent continuous optimization for the maxcut problem 
which will serve as the cornerstone of the subsequent design of an iterative algorithm.

Now we only need to consider 
\begin{equation}\label{eq:maxcut-continuous-1}
r_{\max} = \max\limits_{\vec x\in\mathbb{R}^n \setminus \{\vec0\}}F(\vec x).
\end{equation}
As shown in Theorem \ref{thm:conver_1}, it can be solved via the following so-called Dinkelbach iterative scheme \cite{Dinkelbach1967}, 
\begin{subequations}
\label{iter0}
\begin{numcases}
\vec x^{k+1}={\argmin\limits_{\|\vec x\|_p= 1} \{r^k \|\vec x\|_{\infty}-I(\vec x)\}}, \,\,\, p\in[1,\infty],\label{iter0-1}\\
r^{k+1}=F(\vec x^{k+1}),
\end{numcases}
\end{subequations}
where $\|\cdot\|_p$ denotes the standard $p$-norm in $\mathbb{R}^n$, i.e.,
\begin{equation}\label{eq:pnorm}
\|\vec x\|_p = (|x_1|^p+|x_2|^p+\cdots+|x_n|^p)^{\frac{1}{p}}.
\end{equation}

\begin{theorem}[global convergence]
\label{thm:conver_1}
The sequence $\{r^k\}$ generated by the iterative scheme \eqref{iter0} from any initial point $\vec x^0\in \mathbb{R}^n\setminus \{\vec0\}$ increases monotonically to the global maximum $r_{\max}$.
\end{theorem}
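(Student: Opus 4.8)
The plan is to run the classical Dinkelbach argument for the ratio $F=I/\norm{\cdot}$, whose numerator $I$ in \eqref{eq:I(x)} and denominator $\norm{\cdot}$ in \eqref{eq:inf} are both continuous and positively homogeneous of degree one, so that $F$ is homogeneous of degree zero as already noted. The central object is the optimal value of the inner subproblem \eqref{iter0-1},
\[
h(r)=\min_{\|\vec x\|_p=1}\{\, r\,\norm{\vec x}-I(\vec x)\,\},\qquad r\in\mathbb{R}.
\]
First I would record three elementary facts about $h$. (i) The minimum is attained, since $\{\|\vec x\|_p=1\}$ is compact and the integrand is continuous, so the iteration \eqref{iter0} is well defined and, because $\vec x^{k+1}\neq\vec 0$, so is $r^{k+1}=F(\vec x^{k+1})$. (ii) $h$ is concave and nondecreasing, being a pointwise infimum of the affine maps $r\mapsto r\,\norm{\vec x}-I(\vec x)$, each of which has slope $\norm{\vec x}\in(0,1]$ on the feasible set because $\norm{\vec x}\le\|\vec x\|_p=1$; in particular $h$ is $1$-Lipschitz on $\mathbb{R}$. (iii) Writing $r\,\norm{\vec x}-I(\vec x)=\norm{\vec x}\,(r-F(\vec x))$ for $\vec x\neq\vec 0$ and using the degree-zero homogeneity of $F$ to rescale an arbitrary maximizer of $F$ onto $\{\|\vec x\|_p=1\}$, one obtains $h(r)<0$ for $r<r_{\max}$ and $h(r_{\max})=0$; combined with (ii) this shows $r_{\max}$ is the unique zero of $h$ and $h(r)\le 0$ for every $r\le r_{\max}$.

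Next I would show by induction on $k$ that $r^k\le r_{\max}$ and $r^{k+1}\ge r^k$. The upper bound is immediate from the definition \eqref{eq:maxcut-continuous-1} of $r_{\max}$ (the base case being $r^0=F(\vec x^0)\le r_{\max}$). For monotonicity, by the induction hypothesis and fact (iii) we have $h(r^k)=r^k\,\norm{\vec x^{k+1}}-I(\vec x^{k+1})\le 0$, i.e. $I(\vec x^{k+1})\ge r^k\,\norm{\vec x^{k+1}}$, which rearranges to $r^{k+1}=I(\vec x^{k+1})/\norm{\vec x^{k+1}}\ge r^k$. Hence $\{r^k\}$ is nondecreasing and bounded above by $r_{\max}$, so it converges to some $r^*\le r_{\max}$.

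The only step with real content is to exclude $r^*<r_{\max}$. Substituting $I(\vec x^{k+1})=r^{k+1}\norm{\vec x^{k+1}}$ into the identity for $h(r^k)$ above gives
\[
r^{k+1}-r^k=\frac{-\,h(r^k)}{\norm{\vec x^{k+1}}}\;\ge\;-\,h(r^k)=\bigl|h(r^k)\bigr|\;\ge\;0,
\]
where the middle inequality is exactly where the normalization $p\in[1,\infty]$ is used, through $\norm{\vec x^{k+1}}\le\|\vec x^{k+1}\|_p=1$. Since the left-hand side tends to $0$, so does $h(r^k)$; combining this with $r^k\to r^*$ and the $1$-Lipschitz continuity of $h$ forces $h(r^*)=0$, whence $r^*=r_{\max}$ by uniqueness of the zero. (The same identity also yields $r^{k+1}>r^k$ whenever $r^k<r_{\max}$, i.e. strict increase up to the limit.) I do not anticipate a hard estimate anywhere: the main obstacle is simply setting up $h$ correctly — attainment of the inner minimum, the sign characterization of $h$ via homogeneity, and the uniform bound $\norm{\vec x^{k+1}}\le 1$ — after which monotone convergence plus Lipschitz continuity closes the argument.
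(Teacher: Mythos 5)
Your proposal is correct and follows essentially the same Dinkelbach argument as the paper: monotonicity and boundedness of $\{r^k\}$ give a limit $r^*$, the auxiliary value function (your $h$, the paper's $f$) satisfies $h(r^k)=\norm{\vec x^{k+1}}(r^k-r^{k+1})\to 0$, and continuity forces $h(r^*)=0$, which identifies $r^*$ with $r_{\max}$. The only differences are minor refinements of the same skeleton — you obtain monotonicity from the sign of $h$ below $r_{\max}$ rather than by plugging the feasible competitor $\vec x^k$ into the inner minimization, and you justify continuity of $h$ via $1$-Lipschitzness instead of the paper's appeal to compactness.
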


\begin{proof}
The definition of $\vec x^{k+1}$ (see Eq.~\eqref{iter0-1}) implies
\begin{equation*}
r^k \|\vec x\|_{\infty}-I(\vec x)\ge r^k\|\vec x^{k+1}\|_{\infty}-I(\vec x^{k+1}),\,\,\forall\, \vec x\,\text{ s.t. }\|\vec x\|_p= 1,
\end{equation*}
and substituting $\vec x=\vec x^k$ into the above inequality yields
\begin{equation*}
0=r^k \|\vec x^k\|_{\infty}-I(\vec x^k)\ge r^k\|\vec x^{k+1}\|_{\infty} -I(\vec x^{k+1}),
\end{equation*}
which means
\begin{equation*}
r^k\le r^{k+1}\le r_{\max}, \,\,\, \forall\, k\in \mathbb{N}^+.
\end{equation*}
Therefore
\[
\exists\, r^* \in [0,r_{\max}]\,\, \text { s.t. }\, \lim\limits_{k\to+\infty}r^k=r^*,
\]
and it suffices to show $r_{\max}\le r^*$. To this end, we denote
\begin{equation*}
f(r)=\min\limits_{\|\vec x\|_p= 1}(r\|\vec x\|_{\infty}-I(\vec x)),
\end{equation*}
which must be continuous on $\mathbb{R}$ by the compactness of the unit closed sphere $S_p=\{\vec x\in\mathbb{R}^n:\|\vec x\|_p= 1\}$. 

Note that
\begin{align*}
f(r^k)&=r^k\|\vec x^{k+1}\|_{\infty}-I(\vec x^{k+1})\\
&=r^k\|\vec x^{k+1}\|_{\infty}-r^{k+1}\|\vec x^{k+1}\|_{\infty}\\
&=\|\vec x^{k+1}\|_{\infty}
(r^k-r^{k+1})\to 0 \quad \text{as} \quad k\to +\infty,
\end{align*}
then we have
\begin{equation*}
f(r^*)=\lim\limits_{k\to+\infty}f(r^k)=0,
\end{equation*}
which implies
\begin{equation*}
r^* \|\vec x\|_{\infty} -I(\vec x)\ge 0, \,\,\forall\, \vec x\,\text{ with }\|\vec x\|_p= 1.
\end{equation*}
Hence, $\forall\, \vec x\in \mathbb{R}^n\setminus\{\vec0\}$, $$F(\vec x)=\frac{I(\vec x)}{\|\vec x\|_\infty}= \frac{I(\vec x)/\|\vec x\|_p}{\|\vec x\|_\infty/\|\vec x\|_p}= \frac{I(\hat{\vec x})}{\|\hat{\vec x}\|_\infty }\le r^*,$$
where $\hat{\vec x}=\vec x/\|\vec x\|_p$.  
\end{proof}

Although the inner subproblem \eqref{iter0-1} 
is not only non-convex but also
non-solvable in polynomial time due to the NP-hardness of the maxcut problem, 
the Dinkelbach scheme \eqref{iter0} provides us a good starting point to 
a feasible iterative algorithm for the maxcut problem.




\begin{remark}
Obviously, the equivalent continuous optimization \eqref{eq:maxcut-continuous-1} has a fractional form,
i.e., a ratio between two convex functions, but such kind of fractions have been hardly touched in the field of fractional programming \cite{SchaibleIbaraki1983}, where concave optimization problems, like optimizing the ratio of a concave function to a convex one, are usually considered. 
\end{remark}

\section{A simple iterative algorithm}
\label{sec:iter-scheme}



The non-convex subproblem \eqref{iter0-1} brings us a significant insight to deal with a relaxed subproblem alternatively, though it can not be solved in polynomial time. 

Denote the subgradient of $I(\vec x)$ by \cite{ChangShaoZhang2017}
\begin{equation}\label{eq:subg}
\partial I(\vec x) = \{\vec s=(s_1,\ldots,s_n) \big|\, s_i = \sum_{j:\{i,j\}\in E} w_{ij} z_{ij}, \,z_{ij} \in \sgn(x_i-x_j)\mbox{ and }z_{ij}=-z_{ji}\},
\end{equation}
where we have extended the sign function (note that $\sign(0)=1$ here)
\begin{equation}\label{signt}
\sign (t)=\begin{cases}
1, &\;\text{if }t\ge0,\\
-1,&\;\text{if }t<0,
\end{cases}
\end{equation}
into a set-valued function 
\begin{equation}\label{sgnt}
\sgn (t)=\begin{cases}
\{1\}, &\;\text{if }t>0,\\
\{-1\},&\;\text{if }t<0,\\
[-1,1],&\;\text{if }t=0.
\end{cases}
\end{equation}
For $\vec x=(x_1,x_2,\ldots,x_n)\in\mathbb{R}^n$, 
we are able to define corresponding vectorized versions
in an element-wise manner: 
\begin{align}
\sign(\vec x)&=(s_1, s_2,\ldots,s_n),\,\,\,s_i = \sign(x_i),\,i = 1,2,\ldots,n,\\
\sgn(\vec x)&=\{(s_1, s_2,\ldots,s_n) \big| s_i\in \sgn(x_i),\,i = 1,2,\ldots,n\}.
\end{align}

Since the function $I(\cdot)$ is convex, it holds
\begin{equation}
I (\vec x)\ge I(\vec y)+(\vec x-\vec y, \vec s)  , \,\,\, \forall\, \vec s\in\partial I (\vec y), \,\,\, \forall\, \vec x, \vec y \in \mathbb{R}^n,
\end{equation}
where $(\cdot,\cdot)$ denotes the standard inner product in $\mathbb{R}^n$. 
Further considering the fact that $I(\cdot)$ is homogeneous of degree one, we have 
\begin{equation}\label{eq:ieq}
I (\vec y)=(\vec y, \vec s)  , \,\,\, \forall\, \vec s\in\partial I (\vec y), \,\,\, \forall\, \vec y \in \mathbb{R}^n,
\end{equation}
and 
\begin{equation}\label{eq:rex}
I (\vec x)\geq  I(\vec y) +(\vec x-\vec y, \vec s) = (\vec x, \vec s), \,\,\, \forall\, \vec s\in\partial I (\vec y), \,\,\, \forall\, \vec x, \vec y \in \mathbb{R}^n.
\end{equation}

Plugging the relaxation \eqref{eq:rex} into the subproblem \eqref{iter0-1} 
modifies the two-step Dinkelbach iterative scheme into 
the following three-step one 
\begin{subequations}
\label{iter1}
\begin{numcases}{}
\vec x^{k+1}=\argmin\limits_{\|\vec x\|_p= 1} \{r^k \|\vec x\|_{\infty} - (\vec x,\vec s^k)\}, \,\,\, p\in[1,\infty], \label{eq:twostep_x2}
\\
r^{k+1}=F(\vec x^{k+1}),
\label{eq:twostep_r2}
\\
\vec s^{k+1}\in\partial I(\vec x^{k+1}),
\label{eq:twostep_s2}
\end{numcases}
\end{subequations}
with an initial data: 
$\vec x^0\in \mathbb{R}^n\setminus \{\vec0\}$, $r^0 = F(\vec x^0)$ and $\vec s^0\in\partial I(\vec x^0)$.

It can be readily verified that the subproblem \eqref{eq:twostep_x2} is convex via the relaxation \eqref{eq:rex} from Eq.~\eqref{iter0-1}. More importantly, we can write down a solution to the inner subproblem \eqref{eq:twostep_x2}
in an analytical manner (see Theorem \ref{Thm:exact_solution}). 
That is, no any other optimization solver is needed in \eqref{iter1} and so that a {\it simple} iterative algorithm we name it. Actually, we are able to prove that such iterative scheme \eqref{iter1} still keeps the monotonicity (see Theorem \ref{thm:conver_2}) and has local convergence (see Theorem \ref{thm:conver_3}).


%
%

\subsection{Exact solution to the inner subproblem}
\label{subsec:solution-inner}


Let
\begin{equation}\label{eq:L}
{L}(r,\vec v) := \min_{\|\vec x\|_p= 1}\{ r \norm{\vec x} -(\vec x,\vec v)\},
\,\,\,r\in\mathbb{R},\,\, \vec v\in\mathbb{R}^n, 
\end{equation}
denote the minimal value in Eq.~\eqref{eq:twostep_x2}.
In order to obtain the exact solution,
we need to show first a property of the simple iteration \eqref{iter1} (see Lemma \ref{lem:rs}),
and use it to narrow the scope of discussion.

\begin{lemma}
\label{lem:rs}
Suppose $\vec x^k$, $r^k$ and $\vec s^k$  are generated by the simple iteration \eqref{iter1}.
Then for $k\geq 1$, we always have $r^k\leq \Vert \vec s^k\Vert_1$. In particular, 
(1) $r^k= \Vert \vec s^k\Vert_1$ if and only if $\vec x^k/\norm{\vec x^k}\in \sgn(\vec s^k)$;
and (2) $r^k<\Vert \vec s^k\Vert_1$ if and only if $L(r^k,\vec s^k)<0$.
\end{lemma}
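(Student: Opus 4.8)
The plan is to reduce the whole lemma to a single identity, $I(\vec x^k)=(\vec x^k,\vec s^k)$, supplied by Eq.~\eqref{eq:ieq} (valid because $\vec s^k\in\partial I(\vec x^k)$ and $I$ is positively homogeneous of degree one), together with the $\ell^\infty$--$\ell^1$ duality $(\vec x,\vec v)\le\|\vec x\|_\infty\|\vec v\|_1$. First I would write $r^k\|\vec x^k\|_\infty=I(\vec x^k)=(\vec x^k,\vec s^k)\le\|\vec x^k\|_\infty\|\vec s^k\|_1$ and divide by $\|\vec x^k\|_\infty>0$ (note $\vec x^k\neq\vec0$, since $\|\vec x^k\|_p=1$ for $k\ge1$) to obtain $r^k\le\|\vec s^k\|_1$.

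For part~(1) I would examine when this chain is an equality. Equality in $(\vec x^k,\vec s^k)\le\|\vec x^k\|_\infty\|\vec s^k\|_1$ forces, for each coordinate $i$ with $s_i^k\neq0$, both sign agreement $\sign(x_i^k)=\sign(s_i^k)$ and norm attainment $|x_i^k|=\|\vec x^k\|_\infty$, i.e.\ $x_i^k/\|\vec x^k\|_\infty=\sign(s_i^k)$; the coordinates with $s_i^k=0$ are automatically admissible because $|x_i^k|/\|\vec x^k\|_\infty\le1$ and $\sgn(0)=[-1,1]$ by \eqref{sgnt}. This is precisely the statement $\vec x^k/\|\vec x^k\|_\infty\in\sgn(\vec s^k)$, which gives the "if and only if" in (1).

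For part~(2) I would first note that $L(r^k,\vec s^k)\le0$ for free: the point $\vec x^k$ is feasible in the minimization \eqref{eq:L}, and $r^k\|\vec x^k\|_\infty-(\vec x^k,\vec s^k)=r^k\|\vec x^k\|_\infty-I(\vec x^k)=0$. Since $r^k\le\|\vec s^k\|_1$ always holds, the equivalence in (2) is the same as "$L(r^k,\vec s^k)=0\iff r^k=\|\vec s^k\|_1$", which I would prove in both directions. If $r^k=\|\vec s^k\|_1$, then $(\vec x,\vec s^k)\le\|\vec x\|_\infty\|\vec s^k\|_1=r^k\|\vec x\|_\infty$ for every $\vec x$, so the objective in \eqref{eq:L} is nonnegative on the whole sphere $\{\|\vec x\|_p=1\}$ and $L(r^k,\vec s^k)=0$. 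Conversely, if $L(r^k,\vec s^k)=0$, then $r^k\|\vec x\|_\infty-(\vec x,\vec s^k)\ge0$ on $\{\|\vec x\|_p=1\}$, hence on all of $\mathbb{R}^n$ by positive homogeneity; evaluating at a sign vector $\vec\sigma$ with $\sigma_i\in\{\pm1\}$ and $\sigma_i s_i^k=|s_i^k|$ (so $\|\vec\sigma\|_\infty=1$) yields $r^k-\|\vec s^k\|_1\ge0$, which together with the already-proven inequality forces $r^k=\|\vec s^k\|_1$.

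I expect the only delicate point to be the bookkeeping in part~(1): making the equality analysis of $\ell^\infty$--$\ell^1$ duality match verbatim with membership in the set-valued map $\sgn(\cdot)$, in particular verifying that the coordinates where $s_i^k=0$ impose no constraint. Everything else is a direct application of \eqref{eq:ieq}, the norm duality, and the positive homogeneity of $\vec x\mapsto r^k\|\vec x\|_\infty-(\vec x,\vec s^k)$.
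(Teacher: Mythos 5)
Your proposal is correct and follows essentially the same route as the paper: the key inequality $r^k\|\vec x^k\|_\infty=I(\vec x^k)=(\vec x^k,\vec s^k)\le\|\vec x^k\|_\infty\|\vec s^k\|_1$ from Eq.~\eqref{eq:ieq} plus H\"older duality, with the equality case of that duality giving part (1), and part (2) argued with the same test vectors (a sign vector in $\sgn(\vec s^k)$ and the feasibility of $\vec x^k$). The only cosmetic difference is that you prove part (2) in contrapositive form, via the observation $L(r^k,\vec s^k)\le 0$, whereas the paper argues both implications of the strict inequality directly; the content is identical.
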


\begin{proof}
From H\"older's inequality,
\begin{equation}\label{eq:rls}
r^k\norm{\vec x}-(\vec x,\vec s^k) \ge r^k\norm{\vec x}-\Vert \vec s^k\Vert_1 \norm{\vec x} =(r^k-\Vert \vec s^k\Vert_1)\norm{\vec x},
\end{equation}
where the equality holds if and only if $\vec x\in \norm{\vec x}\sgn(\vec s^k)$. 

Plugging $\vec x= \vec x^k$ into Eq.~\eqref{eq:rls} and using Eq.~\eqref{eq:ieq} lead to
\begin{equation*}
(r^k-\Vert \vec s^k\Vert_1)\norm{\vec x^k}\le r^k\norm{\vec x^k}-(\vec x^k,\vec s^k) = r^k\norm{\vec x^k}- I(\vec x^k) = 0,
\end{equation*}
and thus $r^k\leq \Vert \vec s^k\Vert_1$.

Meanwhile, the statement (1) corresponds to 
the situation in which the equality holds. 

Next we will show the statement (2) is true.
On one hand, if $L(r^k,\vec s^k)<0$,
then there exists a vector $\vec x^{k+1}$ satisfying  
\begin{equation}\label{eq:dir1}
L(r^k,\vec s^k)=r^k \|\vec x^{k+1}\|_\infty-(\vec x^{k+1},\vec s^k)<0.
\end{equation}
Substituting $\vec x= \vec x^{k+1}$ into Eq.~\eqref{eq:rls} and using Eq.~\eqref{eq:dir1} yield
\[
(r^k-\Vert \vec s^k\Vert_1)\norm{\vec x^{k+1}}\le r^k\norm{\vec x^{k+1}}-(\vec x^{k+1},\vec s^k)=L(r^k,\vec s^k)<0, 
\]
and thus $r^k<\Vert \vec s^k\Vert_1$.

On the other hand, assume $r^k<\Vert \vec s^k\Vert_1$ holds. Choose $\vec y\in \sgn(\vec s^k)$ and let $\vec x^*  = {\vec y}/{\|\vec y\|_p}$.  It is obvious that $\norm{\vec y}=1$,  $\|\vec x^*\|_p =1$ and $(\vec y, \vec s^k) = \|\vec s^k\|_1$. In consequence, we have 
\begin{align*}
L(r^k,\vec s^k)&\le r^k \|\vec x^*\|_\infty-(\vec x^*,\vec s^k) \\  
&=r^k \|\frac{\vec y}{\|\vec y\|_p}\|_\infty-(\frac{\vec y}{\|\vec y\|_p},\vec s^k)\\
&=\frac{1}{\|\vec y\|_p}(r^k-\|\vec s^k\|_1)<0.
\end{align*}

The proof is completed. 
\end{proof}

%
%

Given a real number $r>0$ and a vector $\vec v=(v_1,\cdots,v_n)\in\mathbb{R}^n$,
in view of Lemma \ref{lem:rs} and the subproblem \eqref{eq:twostep_x2}, 
we only need to consider the minimization problem
\begin{equation}\label{op}
\vec x^*=\argmin_{\|\vec x\|_p= 1}\{ r \norm{\vec x} -(\vec x,\vec v)\},
\end{equation}
under the condition
\begin{equation}\label{eq:con}
0<r\le\|\vec v\|_1.
\end{equation}

Without loss of generality, it suffices to discuss an ordered situation:
\begin{equation}\label{eq:order0}
\abs{v_1}\ge \abs{v_2}\ge\cdots  \ge \abs{v_n}\ge \abs{v_{n+1}}=0,
\end{equation}
where we have extended the index set into $\{1,\ldots,n+1\}$ and introduced an auxiliary element $v_{n+1}=0$ for convenience.
Consider the accumulation of increment 
\begin{equation}\label{eq:A}
A(m) = \sum_{j=1}^m (|v_j|-|v_{m+1}|), \,\,\, m\in\{1,2,\ldots,n\},
\end{equation}
and then from Eq.~\eqref{eq:order0} we have 
\begin{equation}
0=A(0)\leq A(1)\leq A(2) \leq \cdots \leq A(n) = \|\vec v\|_1. 
\end{equation}
Meanwhile, we need a key step to increase the regularity for $p\in(1,\infty)$ through rewriting 
the minimization problem \eqref{op} into
\begin{align}
\min_{\|\vec x\|_p= 1}\{ r \norm{\vec x} -(\vec x,\vec v)\} &= 
\min_{\|\vec u\|_p= 1}\{ r \norm{\vec u} -(\vec u,|\vec v|)\}  \label{eq:min0}\\
&= \min_{\vec w \neq \vec 0}\frac{ r \norm{\vec w} -(\vec w,|\vec v|)}{\|\vec w\|_p} \\
&= \min_{\norm{\vec z}= 1} \frac{ r  -(\vec z,|\vec v|)}{\|\vec z\|_p}\\
&= \min_{{\vec z}\in B_\infty} G(\vec z),
\label{eq:minG}
\end{align}
where 
\begin{align}
G(\vec z) &= \frac{r  -(\vec z,|\vec v|)}{\|\vec z\|_p}, \\
B_\infty &= \{\vec z \in \mathbb{R}^n: \|\vec z\|_\infty \le 1\},
\end{align}
and the absolute value is taken in an element-wise manner, e.g.,  
$|\vec v|  =(|v_1|,|v_2|,\ldots, |v_n|)$. 
Here we have used extensively the fact that 
$\|\cdot\|_p$,  $\|\cdot\|_\infty$ and $(\cdot,|\vec v|)$
are all homogeneous of degree one,
and in Eq. \eqref{eq:minG} the fact that 
$G(\vec z)$ achieves its minimum on the boundary of the feasible region $B_\infty$
due to 
\begin{equation}
G(\lambda\vec z)-G(\vec z) =(\frac{1}{\lambda}-1)\frac{r}{\|\vec z\|_p}<0 \,\,\,\, \text{for}\,\,\,\, \lambda>1.
\end{equation}
If one denotes the corresponding minimizers in Eqs.~\eqref{eq:min0}-\eqref{eq:minG} by $\vec x^*$,  
$\vec u^*$, $\vec w^*$, and $\vec z^*$, respectively, 
then we have
\begin{align}
\vec u^* &= \text{sign}(\vec v) \cdot \vec x^*, \label{eq:dot} \\
\vec w^* &= \|\vec w^*\|_p \vec u^*, \label{eq:w}\\
\vec z^* &= \frac{\vec w^*}{\norm{\vec w^*}}, \label{eq:z}
\end{align}
where $\vec x\cdot \vec y$ in Eq.~\eqref{eq:dot} denotes element-by-element multiplication of vectors $\vec x$ and $\vec y$.  
It is obvious that we only need to search for the minimizer $\vec z^*$
to the minimization problem \eqref{eq:minG},
with which we are able to reach our target 
\begin{equation}\label{eq:case1-sol}
\vec x^* = \frac{\text{sign}(\vec v) \cdot \vec z^*}{\|\vec z^*\|_p},
\end{equation}
by virtue of   
the one-to-one mappings \eqref{eq:dot}, \eqref{eq:w} and \eqref{eq:z}.


According to the condition \eqref{eq:con},  
the rest of the discussion falls into the following three scenarios. 
Before that, we need the following lemma to characterize the minimizer of $G(\vec z)$ on $B_\infty$,
denoted by $\vec z^*=(z_1^*,z_2^*,\ldots,z_n^*)$.

\begin{lemma}
\label{lem:z*}
Let $\vec z^*=(z_1^*,z_2^*,\ldots,z_n^*)$ be the minimizer of $G(\vec z)$ on $B_\infty$. 
If $r<\|\vec v\|_1$ and $1\leq p<\infty$, then $\vec z^*$ has not only nonnegative elements, 
but also the same order as $|\vec v|$.
\end{lemma}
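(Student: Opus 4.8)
The plan is to prove both claims — nonnegativity of the entries of $\vec z^*$ and that they are sorted in the same order as $|\vec v|$ — by a direct rearrangement/swapping argument: assume a minimizer violates the conclusion, then exhibit a competitor $\vec z$ with $G(\vec z) < G(\vec z^*)$, contradicting minimality. The point is that $G(\vec z) = (r - (\vec z,|\vec v|))/\|\vec z\|_p$ has numerator that we want to make \emph{small} (so $(\vec z,|\vec v|)$ large) while keeping $\|\vec z\|_p$ fixed or smaller; under the condition $r < \|\vec v\|_1$ we will first argue that the numerator at the minimum is negative (this is where Lemma~\ref{lem:rs}(2), via the chain \eqref{eq:min0}--\eqref{eq:minG}, comes in: the minimum value $L(r,\vec v)$ is strictly negative), which fixes the sign of $G(\vec z^*)$ and tells us which direction of perturbation helps.

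\textbf{Nonnegativity.} Suppose some coordinate satisfies $z_i^* < 0$. Since $|v_i| \ge 0$, replacing $z_i^*$ by $|z_i^*| = -z_i^*$ produces a vector $\vec z$ still in $B_\infty$ with the same $p$-norm, $\|\vec z\|_p = \|\vec z^*\|_p$, but with $(\vec z,|\vec v|) - (\vec z^*,|\vec v|) = 2|z_i^*|\,|v_i| \ge 0$, hence numerator $r - (\vec z,|\vec v|) \le r - (\vec z^*,|\vec v|)$. So $G(\vec z) \le G(\vec z^*)$, and $\vec z$ is also a minimizer; iterating over all negative coordinates yields a nonnegative minimizer. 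To get that \emph{every} minimizer is nonnegative (as the statement asserts), one uses $r < \|\vec v\|_1$: the numerator at the minimum is strictly negative, and if $z_i^* < 0$ while $|v_i| > 0$ the inequality above on the numerator is strict, forcing $G(\vec z) < G(\vec z^*)$, a contradiction; the remaining case $|v_i| = 0$ is handled by noting such coordinates can be set to any value in $[-1,1]$ without changing $G$, in particular to a nonnegative one, and one fixes the convention so that the stated minimizer is the nonnegative one.

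\textbf{Same order as $|\vec v|$.} Now assume $\vec z^* \ge 0$ but there is an inversion: indices $i < j$ with $|v_i| \ge |v_j|$ (in the ordered setup \eqref{eq:order0}) yet $z_i^* < z_j^*$. Swap them: let $\vec z$ agree with $\vec z^*$ except $z_i = z_j^*$, $z_j = z_i^*$. Then $\|\vec z\|_p = \|\vec z^*\|_p$ and $\|\vec z\|_\infty = \|\vec z^*\|_\infty$, so $\vec z \in B_\infty$, while $(\vec z,|\vec v|) - (\vec z^*,|\vec v|) = (|v_i| - |v_j|)(z_j^* - z_i^*) \ge 0$, so again $G(\vec z) \le G(\vec z^*)$; under $r < \|\vec v\|_1$, if the inequality $|v_i| - |v_j| > 0$ combined with $z_j^* - z_i^* > 0$ is strict, we contradict minimality (after using that the numerator is negative). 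The only subtlety is equal-weight ties $|v_i| = |v_j|$, where the swap leaves $G$ unchanged and one simply argues that among all minimizers there is one respecting the order, and then upgrades this to "the" minimizer by the same tie-breaking convention used above.

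\textbf{Main obstacle.} The genuinely delicate point is not the swapping inequalities — those are one-line computations — but pinning down the \emph{sign of the numerator} $r - (\vec z^*,|\vec v|)$ and handling the zero-weight / tie cases so that the conclusion holds for the minimizer rather than merely for some minimizer. The negativity of $G(\vec z^*)$ must be extracted cleanly from $r < \|\vec v\|_1$ via the reduction in \eqref{eq:min0}--\eqref{eq:minG} and the construction in the proof of Lemma~\ref{lem:rs}(2) (take $\vec z$ proportional to an element of $\sgn(\vec v)$, i.e. $\vec z = \mathrm{sign}(\vec v)$, giving $G(\vec z) = (r - \|\vec v\|_1)/\|\mathrm{sign}(\vec v)\|_p < 0$); only with $G(\vec z^*) < 0$ in hand does "increasing $(\vec z,|\vec v|)$ while keeping $\|\vec z\|_p$ fixed strictly decreases $G$" actually follow. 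I would state this negativity as the first line of the proof and then run the two swapping arguments.
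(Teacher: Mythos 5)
Your overall route coincides with the paper's: first extract $r-(\vec z^*,|\vec v|)<0$ from $r<\|\vec v\|_1$ by comparing with a fixed feasible vector (the paper uses $\vec 1\in B_\infty$; your $\sign(\vec v)$ is the same thing after the reduction to $|\vec v|$), then eliminate a violating coordinate or a strict inversion by a local modification. Your swap step is exactly the paper's third step; note that the paper reads ``same order as $|\vec v|$'' simply as the absence of strict inversions, i.e.\ $(z_i^*-z_j^*)(|v_i|-|v_j|)\ge 0$ for all $i,j$, so the tie case $|v_i|=|v_j|$ requires no convention at all, and the negativity of the numerator is not actually needed there: the swap keeps $\|\vec z\|_p$ fixed and strictly decreases the numerator, which already forces $G$ strictly down.

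The one genuine defect is in your nonnegativity step, in the case $z_i^*<0$ with $v_i=0$. Your patch asserts that such a coordinate ``can be set to any value in $[-1,1]$ without changing $G$''; this is false for $1\le p<\infty$: changing $z_i$ leaves the numerator fixed but changes $\|\vec z\|_p$, and since the numerator is strictly negative, $G$ does change (setting $z_i=0$ in fact strictly decreases it). Only the sign flip preserves $G$, so your reflection argument delivers ``some minimizer is nonnegative,'' and the appeal to a tie-breaking ``convention'' does not prove the lemma's claim about an arbitrary minimizer $\vec z^*$. The paper's competitor repairs this uniformly and is the cleaner move: replace the offending $z_i^*<0$ by $0$; then the numerator weakly decreases, $\|\vec z\|_p$ strictly decreases (this is exactly where $p<\infty$ enters), and because the numerator is strictly negative, $G$ strictly decreases --- a contradiction for every minimizer, whether or not $v_i$ vanishes. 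This is also where the negativity you establish at the outset is truly indispensable: in the denominator-shrinking comparison, not in the reflection or the swap. With that one step replaced, your proof is the paper's proof.
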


\begin{proof}
Since $\vec z^*=(z_1^*,z_2^*,\ldots,z_n^*)$ is the minimizer of $G(\vec z)$ on $B_\infty$,  it holds
$$
G(\vec z^*) \le G(\vec z),\;\;\;\forall\,\,\vec z\in B_\infty.
$$ 
In the following, the proof is by contradiction and split into three steps. 

First, 
the condition $r<\|\vec v\|_1$ directly implies that $\vec z^*$ satisfies 
\begin{equation}\label{eq:neg0}
r  -(\vec z^*,|\vec v|) = G(\vec z^*) \|\vec z^*\|_p \le G(\vec 1) \|\vec z^*\|_p=\frac{\|\vec z^*\|_p}{\|\vec 1\|_p}(r -\|\vec v\|_1) < 0,
\end{equation}
where $\vec 1\in B_\infty$.

Second, if there exists some $i\in \{1,\cdots,n\}$ such that $z_i^*<0$,
and let $\hat{\vec z} = \{\hat{z}_1,\cdots,\hat{z}_n\}$ be a vector satisfying: $\hat{z}_i=0$ and $\hat{z}_j = z_j^*$ for $j\ne i$, 
then we have $\hat{\vec z}\in B_\infty$ and
\begin{equation}\label{eq:znonneg}
G(\hat{\vec z}) =  \frac{r  -(\hat{\vec z},|\vec v|)}{\|\hat{\vec z}\|_p} \le \frac{r  -(\vec z^*,|\vec v|)}{\|\hat{\vec z}\|_p}<\frac{r  -(\vec z^*,|\vec v|)}{\|\vec z^*\|_p} = G(\vec z^*),
\end{equation}
from \eqref{eq:neg0}. 
This contradicts with the fact that $\vec z^*$ is the minimizer of $G(\vec z)$ on $B_\infty$. 


Third, if there exists $i,j  \in\{1,\cdots,n\}$ such that $(z_i^*-z_j^*)(|v_i|-|v_j|)<0$, Let $\hat{\vec z}$ be a vector obtained by exchanging the $i$-th and the $j$-th elements of $\vec z$, then $\hat{\vec z}\in B_\infty$, $\|\hat{\vec z}\|_p = \|\vec z^*\|_p$, and 
\begin{equation}
G(\hat{\vec z}) -G(\vec z^*)  =\frac{(z_i^*-z_j^*)(|v_i|-|v_j|)} {\|\vec z\|_p}<0.
\end{equation}
This also contradicts with the fact that  $\vec z^*$ achieves the minimum of $G(\vec z)$ on $B_\infty$.
\end{proof}

More importantly, 
under the conditions of Lemma~\ref{lem:z*}, 
the objective function $G(\vec z)$ is now differentiable 
with the $i$-th partial derivative being
\begin{equation}\label{eq:pg2}
\frac{\partial G(\vec z^*)}{\partial z_i} = - \frac{|v_i|\|\vec z^*\|_p+(r  -(\vec z^*,|\vec v|))\|\vec z^*\|_p^{1-p}(z_i^*)^{p-1}}{\|\vec z^*\|_p^2}. 
\end{equation}

$\bullet$ {\bf Scenario 1}: 
\begin{equation}\label{case1}
r<\|\vec v\|_1,\text{ and } \;\;\;1< p<\infty.
\end{equation}

In view of Lemma \ref{lem:z*} and the decreasing order of $|\vec v|$ described in Eq.~\eqref{eq:order0}, we may assume that there exists an integer $m_0\in\{1,2,\ldots,n\}$ such that $\vec z^* $ satisfies
\begin{equation}\label{eq:order}
1=z_1^*=z_2^*=\cdots=z^*_{m_0}>z_{m_0+1}^*\ge \cdots \ge z^*_n\ge 0.
\end{equation}

Let 
\begin{equation}
{T}= \{\vec t\in\mathbb{R}^n:  \,\vec z^*+\epsilon \vec t \in B_\infty \text{ for sufficiently small } \epsilon >0\}
\end{equation}
denote the tangent cone of $B_\infty$ at $\vec z^*$.

From Eq.~\eqref{eq:order},  the tangent cone can be readily rewritten into
\begin{equation}\label{eq:T2}
T= \{\vec t=(t_1,t_2,\ldots,t_n)\in\mathbb{R}^n:\, t_i\le 0,\,\,i= 1, 2, \ldots,m_0\}.
\end{equation}

Since the minimizer $\vec z^*$ achieves a local minimum of $G(\vec z)$ on $B_\infty$, 
according to Eq.~\eqref{eq:T2}, 
we have 
\begin{align}
& \frac{\partial G(\vec z^*)}{\partial \vec t}\ge 0, \,\,\, \forall\, \vec t\in T  \\
\Leftrightarrow \quad &
\begin{cases}
\displaystyle \frac{\partial G(\vec z^*)}{\partial z_i} \le 0,\,\,\,\, i = 1, 2, \ldots,m_0, \\
\displaystyle \frac{\partial G(\vec z^*)}{\partial z_i} = 0, \,\,\,\, i = m_0+1,\ldots, n.
\end{cases} \label{eq:neqpg}
\end{align}

For the sake of subsequent discussion, 
it is convenient to introduce the following three auxiliary quantities:
\begin{align}
\alpha &= \sum_{i=1}^{m_0}|v_i|-r, \\
\beta &= \sum_{i=m_0+1}^nz_i^*|v_i|, \label{beta}\\
\gamma &= \sum_{i=m_0+1}^n (z_i^*)^p, \label{gamma}
\end{align}
and then 
\begin{align}
\alpha+\beta &= (\vec z^*,|\vec v|)-r>0,\\
m_0+\gamma &= \|\vec z^*\|_p^p>0.
\end{align}
Therefore, the partial derivative \eqref{eq:pg2} becomes
\begin{equation}\label{eq:pg3}
\frac{\partial G(\vec z^*)}{\partial z_i} = \frac{1}{\|\vec z^*\|_p} \left({\frac{\alpha+\beta}{m_0+\gamma}(z_i^*)^{p-1}-|v_i|}\right),
\end{equation}
and then Eq.~\eqref{eq:neqpg} directly implies 
\begin{equation} 
\label{eq:zpm1}
\begin{cases}
\displaystyle (z_i^*)^{p-1} \le \frac{m_0+\gamma}{\alpha+\beta}|v_i|,\,\,\,\, i=1,2\ldots,m_0,\\
\displaystyle (z_i^*)^{p-1} = \frac{m_0+\gamma}{\alpha+\beta}|v_i| ,\,\,\,\, i=m_0+1,\ldots,n. 
\end{cases}
\end{equation}

Substituting Eq.~\eqref{eq:zpm1} into Eq.~\eqref{gamma} and using  Eq.~\eqref{beta} yields
\begin{align}
\gamma &= \sum_{i=m_0+1}^n (z_i^*)^{p-1}z_i^* \\
 &= \sum_{i=m_0+1}^n \frac{m_0+\gamma}{\alpha+\beta}|v_i|z_i^* \\
 &=\frac{m_0+\gamma}{\alpha+\beta}\beta,
\end{align}
and then from $m_0>0$, we have
\begin{equation} 
\frac{m_0+\gamma}{\alpha+\beta}=\frac{m_0}{\alpha},\;\;\;\text{and}\;\;\;\alpha>0.
\end{equation}
That is, the condition for local minimizers, Eq.~\eqref{eq:neqpg} or Eq.~\eqref{eq:zpm1}, 
can be further simplified into 
\begin{equation}\label{eq:zpm2}
\begin{cases}
\displaystyle (z_i^*)^{p-1} \le a_i,\,\,\,\,i=1,2\ldots,m_0,\\
\displaystyle (z_i^*)^{p-1} = a_i,\,\,\,\,i=m_0+1,\ldots,n,
\end{cases}
\end{equation} 
where 
\begin{equation}\label{eq:a}
a_i =\frac{m_0}{\alpha}|v_i|=\frac{m_0|v_i|}{\sum_{j=1}^{m_0}\abs{v_j}-r}, \,\,\,\, i=1, 2, \ldots, n.
\end{equation}

Combining Eqs.~\eqref{eq:order} and \eqref{eq:zpm2} determines the minimizer $\vec z^* = (z_1^*, z_2^*,\ldots, z_n^*)$:  
\begin{equation}\label{sol1}
z_i^* = \min\{1,{a_i}^{\frac{1}{p-1}}\}, \,\,\,\, i=1, 2, \ldots, n, 
\end{equation}
and then we are able to reach our target $\vec x^*$ by Eq.~\eqref{eq:case1-sol}.

Therefore the exact solution to the inner subproblem for \textbf{Scenario 1} 
has been obtained. The only remaining thing is how to determine $m_0$ efficiently used in Eq.~\eqref{eq:a}.
This can be achieved with the help of the accumulation function $A(m)$ defined in Eq.~\eqref{eq:A}.
Namely, $m_0$ is the smallest integer $m$ satisfying $A(m)>r$
\begin{equation}\label{eq:defm0}
m_0= \min\{m\in\{1,2,\ldots,n\}: A(m)> r\},
\end{equation}
which can be readily verified as follows,
\begin{align*}
& a_{m_0}\ge  (z^*_{m_0})^{p-1}=1>(z^*_{m_0+1})^{p-1}=a_{m_0+1} \\
\Leftrightarrow \quad &\frac{m_0|v_{m_0}|}{\sum_{j=1}^{m_0}\abs{v_j}-r}\ge 1>\frac{m_0|v_{m_0+1}|}{\sum_{j=1}^{m_0}\abs{v_j}-r}\\
\Leftrightarrow\quad &\sum_{j=1}^{m_0} (\abs{v_j}-\abs{v_{m_0+1}})> r\ge \sum_{j=1}^{m_0-1} (\abs{v_j}-\abs{v_{m_0}})\\
\Leftrightarrow \quad &   A(m_0)> r\ge A(m_0-1).
\end{align*}

$\bullet$ {\bf Scenario 2}: 
\begin{equation}\label{case2}
r<\|\vec v\|_1,\text{ and } \;\;\;p = 1.
\end{equation}

In this scenario, we still have the minimizer $\vec z^*=(z_1^*,z_2^*,\ldots,z_n^*)$ has nonnegative elements according to Lemma \ref{lem:z*},
namely, $\forall\,i\in\{1,2,\ldots,n\}, 0\le z_i^*\le1$. 

Let $m_1$ be the largest integer satisfying $A(m-1)<r$, i.e.,
\begin{equation}\label{eq:defm1}
m_1 = \max\{m\in\{1,2,\ldots,n\}: A(m-1)<r\},
\end{equation}
where $A(m)$ is the accumulation function defined by Eq.~\eqref{eq:A}.
Comparing Eq. \eqref{eq:defm1} with Eq.~\eqref{eq:defm0}, 
we have $m_1\leq m_0$ where $m_0$ is defined by Eq.~\eqref{eq:defm0}, and specifically 
\begin{equation}
A(m_1-1)<r=A(m_1 ) =A(m_1+1) = \cdots =A(m_0-1) <A(m_0),
\end{equation}
thereby indicating 
\begin{equation}\label{eq:order2}
|v_{m_1}| > 
|v_{m_1+1}| = \cdots =|v_{m_0}|>|v_{m_0+1}|.
\end{equation}

For $i\in\{1,2,\ldots,n\}$, consider a continuous function on $[0,1]$: 
\begin{equation}
G_i(t) = G(z_1^*,\ldots, z_{i-1}^*, t,z_{i+1}^*,\ldots z_n^*)=-|v_i|+\frac{r-\sum_{j\ne i}z_j^*(|v_j|-|v_i|)}{\sum_{j\ne i}|z_j^*|+t}.
\end{equation}
Since $\vec z^*$ is a minimizer of $G(\vec z)$ on $B_\infty$, 
it can be readily verified that $\min_{t\in[0,1]} G_i(t) = G(\vec z^*)$, i.e.,
\begin{equation}\label{eq:git}
z_i^* \in \argmin_{t\in[0,1]} G_i(t),\;\;\;\;\forall\, i\in\{1,2,\ldots,n\},
\end{equation}
from which we are able to determine $z_i^*$.
The related discussion needs to split the index set into the following three cases.


\begin{enumerate}[(1)]
\item For $1\le i\le m_1$, we claim that $G_i(t)$ is a strictly monotonically decreasing function due to 
$r-\sum_{j\ne i}z_j^*(|v_j|-|v_i|)>0$ and thus $z_i^*=1$. 
The verification shown below is in a straightforward manner: 
\begin{align*}
r-\sum_{j\ne i}z_j^*(|v_j|-|v_i|)&\ge r-\sum_{j=1}^{i-1}z_j^*(|v_j|-|v_i|)\\
& \ge r-\sum_{j=1}^{i-1}(|v_j|-|v_i|) = r-A(i-1)>0.
\end{align*}
 
\item For $m_0< i\le n$, we claim that $z_i^*=0$. Suppose the contrary that $z^*_{i_0}>0$ is the positive element with the largest index and $i_0>m_0$. Then  the order given in Eqs.~\eqref{eq:order0} and \eqref{eq:order2} directly implies 
\begin{align*}
r-\sum_{j\ne i_0}z_j^*(|v_j|-|v_{i_0}|)&= r-\sum_{j=1}^{m_1}(|v_j|-|v_{i_0}|)-\sum_{j=m_1+1}^{i_0-1}z_j^*(|v_j|-|v_{i_0}|)\\
& \le r-\sum_{j=1}^{m_1}(|v_j|-|v_{i_0}|)\\
 &
 \begin{cases}
 \displaystyle < r-\sum_{j=1}^{m_1}(|v_j|-|v_{m_1+1}|) =r-A(m_1)= 0, \,\,\, \text{if } m_1<m_0, \\
\displaystyle  \leq r-\sum_{j=1}^{m_1}(|v_j|-|v_{m_1+1}|) =r-A(m_0)< 0, \,\,\, \text{if } m_1 = m_0, 
  \end{cases}
\end{align*}
and thus we have $G_{i_0}(t)$ is a strictly monotonically increasing function. 
That is, the minimizer of $G_{i_0}(t)$ on $[0,1]$ is $0$, i.e., $z_{i_0}^*=0$,
which is obviously a contradiction. 

\item For $m_1<i\le m_0$, using the results for above two cases and  
the order \eqref{eq:order2} yields 
\begin{align*}
r-\sum_{j\ne i}z_j^*(|v_j|-|v_i|) &= r-\sum_{j=1}^{m_1}(|v_j|-|v_i|)-\sum_{j=m_1+1}^{m_0} z_j^* (|v_j|-|v_i|)\\
&=r-\sum_{j=1}^{m_1}(|v_j|-|v_{m_1+1}|)=r-A(m_1)= 0,
\end{align*}
and thus we have $G_i(t)$ is a constant function on $[0,1]$, i.e., $G_i(t) \equiv -|v_i|$.
That is, the minimizer $z_i^*$ can take any value in $[0,1]$. 
%
\end{enumerate}

In a word, the minimizers $\vec z^*=(z_1^*,\ldots,z_n^*)$ for \textbf{Scenario 2} constitute the following set
\begin{equation}\label{sol2}
\{(z_1^*,\ldots,z_n^*)\in[0,1]^n \big | \,\,\,z^*_i=1,i=1,\ldots,m_1, \,\,\,\text{and }\,\,\,z^*_i=0, i=m_0+1,\ldots,n\}.
\end{equation}
and thus $\vec x^*$ can be also obtained through Eq.~\eqref{eq:case1-sol}.

$\bullet$ {\bf Scenario 3}: 
\begin{equation}\label{case3}
r =\|\vec v\|_1, \;\;\; \text{ or }  \;\;\; p = \infty.
\end{equation}

For $r =\|\vec v\|_1$, according to Lemma \ref{lem:rs}, we have that $\vec x^*$ is a minimizer of problem \eqref{op} if and only if
\begin{equation}\label{eq:xsc3}
\vec x^* / \|\vec x^*\|_\infty\in\sgn(\vec v),\,\,\,\, \|\vec x^*\|_p = 1.
\end{equation}

For $p=\infty$, the minimization problem Eq.~\eqref{op} becomes a linear optimization problem:  
\begin{equation}\label{eq:opinfty}
\vec x^* =\argmin_{\|\vec x\|_\infty= 1}\{ r -(\vec x,\vec v)\},
\end{equation}
the solution of which can be still represented by Eq.~\eqref{eq:xsc3}. 

Hence the solution Eq.~\eqref{eq:xsc3} solves the minimization problem \eqref{op}
for {\bf Scenario 3}.


Summarizing the above analysis for three scenarios, we have figured out the exact solution to the inner subproblem \eqref{eq:twostep_x2},  as stated in the following theorem.  

\begin{theorem}[exact solution]
\label{Thm:exact_solution}
The solution of the minimization problem \eqref{op} under the condition \eqref{eq:con}
can be expressed analytically in Eqs.~\eqref{eq:case1-sol} and \eqref{sol1} for $r<\|\vec v\|_1$, and $ 1< p<\infty$,  
Eqs.~\eqref{eq:case1-sol} and \eqref{sol2} for $r<\|\vec v\|_1$, and $ p=1$, and  Eq.~\eqref{eq:xsc3} otherwise. 
\end{theorem}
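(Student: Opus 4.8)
The plan is to prove Theorem \ref{Thm:exact_solution} by simply assembling the case analysis that has already been carried out in the body text preceding the statement. The theorem is a summary statement, so the proof is short: it verifies that the three scenarios \eqref{case1}, \eqref{case2}, \eqref{case3} exhaust all admissible pairs $(r,p)$ under condition \eqref{eq:con}, and cites the derivations already completed for each.

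First I would note that, by Lemma \ref{lem:rs} together with the reformulations \eqref{eq:min0}--\eqref{eq:minG}, solving the inner subproblem \eqref{eq:twostep_x2} reduces to solving \eqref{op} under \eqref{eq:con}, and that via the one-to-one correspondences \eqref{eq:dot}, \eqref{eq:w}, \eqref{eq:z} it suffices to exhibit the minimizer $\vec z^*$ of $G(\vec z)$ on $B_\infty$ and then apply \eqref{eq:case1-sol}. Next I would observe that the condition $0<r\le\|\vec v\|_1$ splits into $r<\|\vec v\|_1$ and $r=\|\vec v\|_1$, and that the range $p\in[1,\infty]$ splits into $p=1$, $1<p<\infty$, and $p=\infty$; intersecting these, every admissible case lies in exactly one of \textbf{Scenario 1} (when $r<\|\vec v\|_1$ and $1<p<\infty$), \textbf{Scenario 2} (when $r<\|\vec v\|_1$ and $p=1$), or \textbf{Scenario 3} (when $r=\|\vec v\|_1$, regardless of $p$, or $p=\infty$, regardless of $r$).

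Then I would invoke the results already established in each scenario: for \textbf{Scenario 1}, Lemma \ref{lem:z*} gives that $\vec z^*$ is nonnegative and ordered like $|\vec v|$, hence $G$ is differentiable at $\vec z^*$, and the first-order optimality condition \eqref{eq:neqpg} on the tangent cone \eqref{eq:T2}, after the simplification leading to \eqref{eq:zpm2}--\eqref{eq:a}, forces $\vec z^*$ to be exactly \eqref{sol1} with $m_0$ determined by \eqref{eq:defm0}; for \textbf{Scenario 2}, the coordinatewise minimization \eqref{eq:git} through the three index cases yields that $\vec z^*$ ranges over the set \eqref{sol2}; and for \textbf{Scenario 3}, Lemma \ref{lem:rs} (for the case $r=\|\vec v\|_1$) and the direct inspection of the linear program \eqref{eq:opinfty} (for the case $p=\infty$) both give \eqref{eq:xsc3}. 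Applying \eqref{eq:case1-sol} in Scenarios 1 and 2 converts $\vec z^*$ into $\vec x^*$, completing the proof.

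There is essentially no obstacle here, since all the analytical work has already been done in the preceding subsection; the only thing to be careful about is the exhaustiveness of the scenario split — in particular confirming that the pair ``$r<\|\vec v\|_1$ and $p=\infty$'' is correctly handled by Scenario 3 rather than falling through a gap, and that the overlapping description of Scenario 3 (``$r=\|\vec v\|_1$ \emph{or} $p=\infty$'') is consistent with the mutually exclusive descriptions of Scenarios 1 and 2. Thus the proof I would write is: ``By Lemma \ref{lem:rs} and the equivalences \eqref{eq:min0}--\eqref{eq:minG}, it suffices to determine the minimizer $\vec z^*$ of $G$ on $B_\infty$ and apply \eqref{eq:case1-sol}. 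Condition \eqref{eq:con} and $p\in[1,\infty]$ together place every case in exactly one of Scenarios 1--3, for which $\vec z^*$ (equivalently $\vec x^*$) was found in \eqref{sol1}, \eqref{sol2}, and \eqref{eq:xsc3} respectively. This is precisely the claimed expression.''
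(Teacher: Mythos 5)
Your proposal is correct and follows essentially the same route as the paper, which likewise proves Theorem~\ref{Thm:exact_solution} simply by summarizing the scenario-by-scenario derivations of Section~\ref{subsec:solution-inner} (the reduction \eqref{eq:min0}--\eqref{eq:minG}, Lemma~\ref{lem:z*} and the first-order conditions for Scenario 1, the coordinatewise argument for Scenario 2, and Lemma~\ref{lem:rs} together with the linear problem \eqref{eq:opinfty} for Scenario 3). Your explicit check that the three scenarios exhaust all pairs $(r,p)$ admitted by \eqref{eq:con} is a welcome, if minor, addition to what the paper leaves implicit.
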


Hereafter we use a set denoted by $X^{k+1}_p$ to collect all those analytical solutions
of the inner subproblem \eqref{eq:twostep_x2}, i.e., $\vec x^{k+1}\in X_p^{k+1}$.
Obviously, $X_p^{k+1}$ is closed.

Using the above analytical solution,
we are able to get a lower bound for $F(\vec x)$ below, 
which will be useful in the subsequent convergence analysis. 

\begin{cor}\label{cor:nondec}
The minimizer $\vec x^*$ to the problem \eqref{op} under the condition \eqref{eq:con} 
satisfies 
 \begin{equation}\label{eq:m}
 \frac{(\vec x^*,\vec v)}{\norm{\vec x^*}} \ge \sum_{i=1}^{m} |v_i|,
 \end{equation}
 where $m =m_0$ for both {\bf Scenario 1} and {\bf Scenario 2}, and $m=n$ for {\bf Scenario 3}.
\end{cor}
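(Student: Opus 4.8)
The plan is to read $(\vec x^*,\vec v)/\norm{\vec x^*}$ off the closed forms supplied by Theorem~\ref{Thm:exact_solution}, treating the same three scenarios. The organizing remark is that $\vec x\mapsto(\vec x,\vec v)/\norm{\vec x}$ is homogeneous of degree zero, so $\vec x^*$ may be replaced by any convenient positive multiple of itself; in each scenario this reduces the quantity to a sum $\sum_i\zeta_i|v_i|$ with $\zeta_i=1$ for $i\le m$ and $\zeta_i\ge0$ otherwise, hence at least $\sum_{i=1}^m|v_i|$. I would begin with \textbf{Scenario~3}: by \eqref{eq:xsc3} the point $\vec x^*/\norm{\vec x^*}$ lies in $\sgn(\vec v)$, so its $i$-th coordinate times $v_i$ equals $|v_i|$ for every $i$ (namely $\sign(v_i)v_i$ when $v_i\ne0$, and $0=|v_i|$ when $v_i=0$); summing over $i$ gives $(\vec x^*,\vec v)/\norm{\vec x^*}=\sum_{i=1}^n|v_i|$, which is the claim with $m=n$, in fact with equality.

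For \textbf{Scenarios~1} and \textbf{2} (both under $r<\|\vec v\|_1$) I would invoke \eqref{eq:case1-sol}, $\vec x^*=(\sign(\vec v)\cdot\vec z^*)/\|\vec z^*\|_p$, where $\vec z^*$ minimizes $G$ on $B_\infty$. Lemma~\ref{lem:z*} gives $z_i^*\ge0$ for all $i$, and I would check that $z_1^*=1$: in Scenario~1 because $A(m_0-1)\le r$ (from \eqref{eq:defm0}) forces $a_{m_0}\ge1$, hence $a_1\ge a_{m_0}\ge1$ in \eqref{sol1}; in Scenario~2 because $m_1\ge1$ (since $A(0)=0<r$). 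Consequently $\norm{\sign(\vec v)\cdot\vec z^*}=\norm{\vec z^*}=1$, and replacing $\vec x^*$ by this proportional vector gives $(\vec x^*,\vec v)/\norm{\vec x^*}=(\sign(\vec v)\cdot\vec z^*,\vec v)=\sum_{i=1}^n z_i^*|v_i|$. It then remains to bound the last sum below by $\sum_{i=1}^{m_0}|v_i|$. In Scenario~1, \eqref{sol1} together with $a_i\ge1$ for $i\le m_0$ yields $z_i^*=1$ for $i\le m_0$ and $z_i^*\ge0$ for $i>m_0$, so the sum equals $\sum_{i=1}^{m_0}|v_i|$ plus a nonnegative tail. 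In Scenario~2 I would pick the representative of the solution set \eqref{sol2} with $z_i^*=1$ for all $m_1<i\le m_0$ (legitimate, since those coordinates range freely over $[0,1]$), so that again $z_i^*=1$ for $i\le m_0$ and $z_i^*=0$ for $i>m_0$, and the sum is exactly $\sum_{i=1}^{m_0}|v_i|$. Either way the claim follows with $m=m_0$.

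The only delicate point---and the main obstacle---is the non-uniqueness of the minimizer in Scenario~2: for minimizers in \eqref{sol2} whose free coordinates are not all equal to $1$, the bound with $m=m_0$ is actually false, so the argument must explicitly single out the representative with $z_i^*=1$ for every $i\le m_0$. Beyond that the proof is routine substitution, supported only by the chain $A(0)=0<r<\|\vec v\|_1=A(n)$, which guarantees $1\le m_1\le m_0\le n$ and in particular ensures $z_1^*=1$ and $\|\vec z^*\|_p>0$, so that the normalization used above is valid.
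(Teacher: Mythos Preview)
Your proof is correct and follows essentially the same approach as the paper: in each scenario you compute $(\vec x^*,\vec v)/\norm{\vec x^*}=(\vec z^*,|\vec v|)$ via the explicit forms in Theorem~\ref{Thm:exact_solution} and then observe $z_i^*=1$ for $i\le m$. Your treatment is in fact slightly more careful than the paper's in two places---you explicitly verify $\norm{\vec z^*}=1$ before dividing by it, and you flag the non-uniqueness in Scenario~2, noting (correctly) that the bound with $m=m_0$ fails for minimizers whose free coordinates are set below $1$; the paper handles this silently by writing ``there exists a minimizer $\vec z^*$'' with the required property.
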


\begin{proof}
For both {\bf Scenario 1} and {\bf Scenario 2}, there exists a minimizer $\vec z^* =( z^*_1, z_2^*, \ldots,z^*_n)\in B_\infty \cap \mathbb{R}^n_+$ to the equivalent problem \eqref{eq:minG},
which satisfies  
$$
z^*_1=\cdots  = z^*_{m_0}=1,
$$
according to Eqs.~\eqref{sol1} and \eqref{sol2}, respectively. 
Then, from Eq.~\eqref{eq:case1-sol}, we have 
\begin{equation}
 \frac{(\vec x^*,\vec v)}{\norm{\vec x^*}} =(\sign(\vec v)\cdot\vec z^*,\vec v) =(\vec z^*,|\vec v|)\ge \sum_{i=1}^{m_0} |v_i|.
\end{equation}
As for {\bf Scenario 3}, using Eq.~\eqref{eq:xsc3}, it can be easily verified that
\begin{equation}
 \frac{(\vec x^*,\vec v)}{\norm{\vec x^*}} =  (\frac{\vec x^*}{\norm{\vec x^*}},\vec v)=(\sgn(\vec v),\vec v) = \|\vec v\|_1 =\sum_{i=1}^{n} |v_i|.
\end{equation}
Thus the proof is finished. 
\end{proof}

\subsection{Subgradient selection and convergence analysis}


Besides the inner subproblem \eqref{eq:twostep_x2},
another key issue to implement the simple iteration \eqref{iter1}
is how to choose the subgradient (see Eq.~\eqref{eq:twostep_s2}).
For a general selection, 
according to Eqs.~\eqref{eq:ieq},  \eqref{eq:rex} and \eqref{iter1},
we are able to obtain $r^k \le r^{k+1}$, 
because 
\begin{align*}
0 = r^k\|\vec x^k\|_\infty -I(\vec x^k) = &r^k\|\vec x^k\|_\infty-( \vec x^k,\vec s^k) \\
\ge & r^k\|\vec x^{k+1}\|_\infty-(\vec x^{k+1}, \vec s^k) \\
\ge & r^k\|\vec x^{k+1}\|_\infty- I(\vec x^{k+1}) \\
=    & \|\vec x^{k+1}\|_\infty(r^k-r^{k+1}).
\end{align*}

\begin{theorem}[monotonicity]
\label{thm:conver_2}
The sequence $\{r^k\}$ generated by the iterative scheme \eqref{iter1} from any initial point $\vec x^0\in \mathbb{R}^n\setminus \{\vec0\}$ increases monotonically.
\end{theorem}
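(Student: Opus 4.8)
The plan is to mimic the short computation that already appears in the excerpt just before the statement of Theorem~\ref{thm:conver_2}, but to make sure every inequality there actually goes through for the analytical solution produced by Theorem~\ref{Thm:exact_solution}. The chain of (in)equalities
\[
0 = r^k\|\vec x^k\|_\infty - I(\vec x^k) = r^k\|\vec x^k\|_\infty - (\vec x^k,\vec s^k)
\ge r^k\|\vec x^{k+1}\|_\infty - (\vec x^{k+1},\vec s^k)
\ge r^k\|\vec x^{k+1}\|_\infty - I(\vec x^{k+1})
= \|\vec x^{k+1}\|_\infty (r^k - r^{k+1})
\]
is exactly what is needed; since $\|\vec x^{k+1}\|_\infty > 0$ this forces $r^{k+1}\ge r^k$, hence the monotonicity of $\{r^k\}$. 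So the proof is essentially: invoke the homogeneity identity \eqref{eq:ieq} for the first equality-to-$(\vec x^k,\vec s^k)$ step, invoke the defining minimization property of $\vec x^{k+1}$ from \eqref{eq:twostep_x2} for the first inequality, and invoke the subgradient inequality \eqref{eq:rex} for the second inequality.

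First I would note that $\vec x^k$ is feasible for the minimization problem \eqref{eq:twostep_x2} defining $\vec x^{k+1}$, because $\|\vec x^k\|_p = 1$ (every iterate is normalized by construction), so substituting $\vec x^k$ as a competitor gives
\[
r^k\|\vec x^{k+1}\|_\infty - (\vec x^{k+1},\vec s^k) \le r^k\|\vec x^k\|_\infty - (\vec x^k,\vec s^k).
\]
Then I would use \eqref{eq:ieq}, which says $(\vec x^k,\vec s^k) = I(\vec x^k)$ for any $\vec s^k\in\partial I(\vec x^k)$, to rewrite the right-hand side as $r^k\|\vec x^k\|_\infty - I(\vec x^k)$, and observe that this equals $0$ because $r^k = F(\vec x^k) = I(\vec x^k)/\|\vec x^k\|_\infty$. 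Next I would apply \eqref{eq:rex} with $\vec x = \vec x^{k+1}$, $\vec y = \vec x^k$, $\vec s = \vec s^k$, giving $(\vec x^{k+1},\vec s^k) \le I(\vec x^{k+1})$, hence $r^k\|\vec x^{k+1}\|_\infty - (\vec x^{k+1},\vec s^k) \ge r^k\|\vec x^{k+1}\|_\infty - I(\vec x^{k+1}) = \|\vec x^{k+1}\|_\infty(r^k - r^{k+1})$, using $r^{k+1} = F(\vec x^{k+1})$. Chaining these yields $0 \ge \|\vec x^{k+1}\|_\infty(r^k - r^{k+1})$, and dividing by $\|\vec x^{k+1}\|_\infty > 0$ finishes it; a one-line induction from the initial data $\vec x^0$, $r^0 = F(\vec x^0)$, $\vec s^0\in\partial I(\vec x^0)$ then gives monotonicity of the whole sequence.

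The only subtlety I anticipate — and it is minor — is making sure the argument does not secretly require $\|\vec x^{k+1}\|_\infty$ to be bounded away from zero in a quantitative way: we only need it strictly positive, which holds because $\|\vec x^{k+1}\|_p = 1$ forces $\vec x^{k+1}\ne\vec 0$ and all norms are equivalent. A second point worth a remark is that this proof uses nothing about the specific analytical form of $\vec x^{k+1}$ from Theorem~\ref{Thm:exact_solution}; it only uses that $\vec x^{k+1}$ is \emph{a} minimizer of the convex inner subproblem \eqref{eq:twostep_x2}. Thus the main ``work'' is purely bookkeeping with \eqref{eq:ieq}, \eqref{eq:rex}, and the definitions of $r^k$, $r^{k+1}$ — essentially the displayed computation already in the text, promoted to a proof. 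I would keep the write-up to that computation plus the trivial induction, since Theorem~\ref{thm:conver_1} was proved in the same spirit and the reader will expect symmetry.
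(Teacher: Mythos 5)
Your proposal is correct and is essentially identical to the paper's own argument: the displayed chain $0 = r^k\|\vec x^k\|_\infty-(\vec x^k,\vec s^k)\ge r^k\|\vec x^{k+1}\|_\infty-(\vec x^{k+1},\vec s^k)\ge \|\vec x^{k+1}\|_\infty(r^k-r^{k+1})$ preceding Theorem~\ref{thm:conver_2} is exactly the paper's proof, using \eqref{eq:ieq}, the minimality in \eqref{eq:twostep_x2}, and \eqref{eq:rex}, just as you do. Your added remarks (feasibility of $\vec x^k$, positivity of $\|\vec x^{k+1}\|_\infty$, independence from the explicit form in Theorem~\ref{Thm:exact_solution}) are harmless bookkeeping that the paper leaves implicit.
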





The monotone increasing in Theorem \ref{thm:conver_2} is also a direct consequence of Lemma \ref{lem:rs} and Corollary \ref{cor:nondec} because there exists a index set $\iota\subset\{1,2,\ldots,n\}$, the size of which should be $m$ in Eq.~\eqref{eq:m} (the upper bound of summation index), such that 
\begin{equation}\label{eq:rsr}
r^{k+1}=\frac{I(\vec x^{k+1})}{\norm{\vec x^{k+1}}}\ge \frac{(\vec x^{k+1},\vec s^k)}{\norm{\vec x^{k+1}}}\ge \sum_{i\in \iota} |s_i^k|\ge r^k,
\end{equation}
where the subgradient $\vec s^k$ is not required to be ordered like Eq.~\eqref{eq:order0}.
In particular, such monotone increasing could be strict, i.e., $r^{k+1} > r^{k}$ via improving the last `$\ge$' in Eq.~\eqref{eq:rsr} into `$>$',  if $\Vert \vec s^k\Vert_1>r^k$  holds in each step.
To this end, we only need to determine a subgradient $\vec s^\sigma\in \partial I(\vec x^k)$ such that $\|\vec s^\sigma\|_1>r^k$ if there exists $\vec s \in \partial I(\vec x^k)$ such that $\|\vec s\|_1>r^k$.

Suppose $\vec x^k = (x_1^k,\ldots,x_n^k)$, $\vec s = (s_1,\ldots,s_n)$, and then we  have 
\begin{align}
\lambda_i(t) & =|t|-\frac{x_i^k}{\|\vec x^k\|_\infty} t\ge 0, \,\,\,\forall\, i\in\{1,\ldots,n\},  \label{eq:lambda_i}\\
\|\vec s\|_1-r^k &=\|\vec s\|_1-\frac{(\vec x^k, \vec s)}{\|\vec x^k\|_\infty} = \sum_{i=1}^n  \lambda_i(s_i),
\end{align}
which yields 
\begin{equation}\label{eq:lambdage0}
\|\vec s\|_1-r^k>0\Leftrightarrow \exists\;\; i\in\{1,\ldots,n\}, \,\, s.t. \,\,\, \lambda_i(s_i)>0.
\end{equation}

Given $\vec x\in \mathbb{R}^n\setminus\{\vec 0\}$, 
let 
\begin{align}
\vec q &=(q _1,\ldots,q _n)\in (\mathbb{R}^+)^n, \quad q_i = \sum_{j:\{i,j\}\in E} w_{ij}\vec 1_{x_i=x_j},\\
\vec p &= (p_1,\ldots,p_n), \quad p_i = \sum_{j:\{i,j\}\in E} w_{ij}\sign(x_i-x_j)-q_i.
\end{align}
It can be readily verified that $\vec p\in \partial I(\vec x)$,
and 
\begin{equation}\label{eq:spq}
s_i\in(\partial I(\vec x))_i = [p_i-q_i,p_i+q_i],  \quad  \forall\, \vec s=(s_1,\ldots, s_n)\in\partial I(\vec x).
\end{equation}
Here $(\partial I(\vec x))_i$ denotes the projected interval of the convex domain $\partial I(\vec x)$ onto the $i$-th coordinate.


Denote $\bar{\vec p}  = (\bar p_1,\ldots,\bar p_n)$ by
\begin{equation}\label{eq:s_barte}
\bar p_i=
\begin{cases}
p_i\mp q_i,&\text{if $i\in S^{\pm}(\vec x)$},\\
p_i+\sign(p_i)q_i,&\text{if $i\in S^<(\vec x)$},
\end{cases}
\end{equation}
which is located on the boundary of $(\partial I(\vec x))_i = [p_i-q_i,p_i+q_i]$.

Accordingly, combining Eqs.~\eqref{eq:lambdage0}, \eqref{eq:spq} and the convexity of $\lambda_i(t)$ given in Eq.~\eqref{eq:lambda_i} leads to
\begin{align*}
& \exists\;\;\vec s\in \partial I(\vec x^k), \,\,\,\,\,\,\,\,\,\,\, s.t.\,\,\,\,\,\|\vec s\|_1>r^k \\
\Leftrightarrow \quad &\exists\;\; i\in\{1,\ldots,n\}, \,\,\,\, s.t. \,\,\, \max_{s_i\in(\partial I(\vec x))_i}\lambda_i(s_i)>0\\
\Leftrightarrow\quad &\exists\;\; i\in\{1,\ldots,n\}, \,\,\,\, s.t. \,\,\,\,\, \lambda_i(\bar p_i)>0.
\end{align*}
That is, if there exists $i\in\{1,\ldots,n\}$ such that $\lambda_i(\bar p_i)>0$, then there should  exist a subgradient $\vec s\in\partial I(\vec x^k)$ satisfying $s_i=\bar p_i$ and  $\|\vec s\|_1>r^k$. 
Hence, if $\bar{\vec p}\in \partial I(\vec x)$ (but hardly holds in general), then we may directly select $\bar{\vec p}$; 
otherwise  we are able to use $\bar{\vec p}$ as an indicator for the subgradient selection. 
%

Define a {\em partial  order} relation `$\le$' on $\mathbb{R}^2$ by $(x_1,y_1)\le (x_2,y_2)$ if and only if either
$x_1<x_2$ or $x_1=x_2, y_1\le y_2$ holds. 

Let $\Sigma(\vec x)$ be a collection of  permutations of $\set{1,2,\ldots,n}$ such that for any $\sigma\in\Sigma(\vec x)$,  it holds 
\begin{equation}\label{eq:por}
(x_{\sigma(1)},\bar{p}_{\sigma(1)})\leq(x_{\sigma(2)},\bar{p}_{\sigma(2)})\leq \cdots\leq (x_{\sigma(n)},\bar{p}_{\sigma(n)}).
\end{equation}

For any $\sigma\in \Sigma(\vec x)$, we select 
\begin{align}
\vec s^\sigma &= (s_1^\sigma,s_2^\sigma,\ldots,s_n^\sigma)\in \partial I(\vec x), \\
s_i^\sigma &= \sum_{j:\{i,j\}\in E}w_{ij} z_{ij}, \quad i=1,2,\ldots,n,  \\
z_{ij} &= \sign(\sigma^{-1}(i)-\sigma^{-1}(j))\in \sgn(x_i -x_j), \quad i,j = 1,2,\ldots,n.
\end{align}
The following theorem demonstrates that 
such subgradient selection is sufficient to guarantee the strict monotonicity $r^{k+1} > r^{k}$ (if any)
with the help of the statement (1) of Lemma \ref{lem:rs}.


\begin{theorem}\label{prop:S123}
For any permutation $\sigma\in \Sigma(\vec x^k)$, we have 
$\|\vec s^\sigma\|>r^k$ if and only if 
there exists $\vec s \in \partial I(\vec x^k)$ such that $\|\vec s\|>r^k$.
\end{theorem}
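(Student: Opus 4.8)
The plan is to reduce the statement about the constructed subgradient $\vec s^\sigma$ to the indicator $\bar{\vec p}$ whose properties were already established in the paragraph preceding Theorem~\ref{prop:S123}. Recall that it has been shown there that there exists $\vec s\in\partial I(\vec x^k)$ with $\|\vec s\|_1>r^k$ if and only if $\lambda_i(\bar p_i)>0$ for some $i\in\{1,\ldots,n\}$, where $\lambda_i(t)=|t|-\frac{x_i^k}{\|\vec x^k\|_\infty}t$. Since the ``if'' direction of the theorem (existence of \emph{some} good subgradient given $\|\vec s^\sigma\|_1>r^k$) is immediate because $\vec s^\sigma\in\partial I(\vec x^k)$, the entire content is the converse: assuming some $\vec s\in\partial I(\vec x^k)$ has $\|\vec s\|_1>r^k$, hence assuming $\lambda_{i_0}(\bar p_{i_0})>0$ for some index $i_0$, one must show that the \emph{particular} choice $\vec s^\sigma$ coming from any $\sigma\in\Sigma(\vec x^k)$ already achieves $\|\vec s^\sigma\|_1>r^k$. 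Equivalently, using the identity $\|\vec s^\sigma\|_1-r^k=\sum_{i=1}^n\lambda_i(s_i^\sigma)\ge 0$, it suffices to produce one index $i$ with $\lambda_i(s_i^\sigma)>0$.

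First I would unwind what $\vec s^\sigma$ does: $z_{ij}=\sign(\sigma^{-1}(i)-\sigma^{-1}(j))$ means that among the vertices with a common value of $x^k$, the subgradient is distributed exactly according to the linear order $\sigma$ refines on the pairs $(x_i^k,\bar p_i)$. The key observation is that for an index $i$ lying in $S^<(\vec x^k)$ the convexity of $\lambda_i$ forces $\lambda_i(t)>0$ whenever $t$ has the ``right'' sign and is large in absolute value; concretely $\lambda_i(\bar p_i)>0$ holds precisely when $\bar p_i$ is pushed to the correct extreme of $[p_i-q_i,p_i+q_i]$, and the definition \eqref{eq:s_barte} of $\bar p_i$ via $\sign(p_i)$ was engineered for exactly that. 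So the task becomes: show that along the ordering $\sigma$, the coordinate $s^\sigma_{i_0}$ inherits enough of $\bar p_{i_0}$ — in particular the same sign and at least the same absolute value in the relevant direction — so that $\lambda_{i_0}(s^\sigma_{i_0})>0$. For vertices $i$ with $|x_i^k|=\|\vec x^k\|_\infty$ (the sets $S^\pm$) the argument is even simpler: $\lambda_i(t)=|t|\mp t$, which is positive iff $\operatorname{sign}(t)=\mp 1$, and the partial order \eqref{eq:por} sorts by $x^k$ first, so all of $S^+$ sits at the top and all of $S^-$ at the bottom of $\sigma$, forcing $z_{ij}$ to the value that makes $s_i^\sigma$ the correct sign for $i\in S^\pm$.

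The core step, and the one I expect to be the main obstacle, is handling ties in the $x_i^k$ values — i.e. a block of vertices sharing a common coordinate value $c$ with $|c|<\|\vec x^k\|_\infty$, among which $q_i>0$ and the edges internal to the block carry the ``free'' part of the subgradient. Within such a block the order $\sigma$ sorts by $\bar p_i$, and one must check that this secondary sort is exactly what makes the indicator index $i_0$ (if it lies in the block) receive $s^\sigma_{i_0}$ on the correct side of $p_{i_0}$. The argument here is a local exchange/monotonicity argument: if $\bar p_{i_0}=p_{i_0}+\operatorname{sign}(p_{i_0})q_{i_0}$ with $\lambda_{i_0}(\bar p_{i_0})>0$, then $p_{i_0}$ and $\frac{x_{i_0}^k}{\|\vec x^k\|_\infty}$ have opposite signs (or the same and small), and sorting the block by $\bar p_i$ places $i_0$ so that \emph{all} the within-block edge contributions at $i_0$ point the same way as $\operatorname{sign}(p_{i_0})$, recovering $s^\sigma_{i_0}=\bar p_{i_0}$ for that coordinate — or at least a value with $\lambda_{i_0}(s^\sigma_{i_0})\ge\lambda_{i_0}(\bar p_{i_0})>0$. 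I would formalize this by computing $s_i^\sigma - p_i = \sum_{j:\,x_j^k=x_i^k}w_{ij}\,\operatorname{sign}(\sigma^{-1}(i)-\sigma^{-1}(j))$ and showing its sign matches $\operatorname{sign}(\bar p_i - p_i)$ for the extremal index in the block, using that $\sigma$ respects the $\bar p$-order. Once $\lambda_{i_0}(s^\sigma_{i_0})>0$ is in hand, $\|\vec s^\sigma\|_1-r^k=\sum_i\lambda_i(s_i^\sigma)\ge\lambda_{i_0}(s^\sigma_{i_0})>0$ closes the proof, and feeding this into statement~(1) of Lemma~\ref{lem:rs} yields the strict monotonicity claimed in the surrounding discussion.
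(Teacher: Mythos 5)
Your reduction of the statement to producing a single coordinate with $\lambda_i(s_i^\sigma)>0$ is the right one, and you correctly identify the blocks of tied coordinates as the crux; but the step you propose to handle them has a genuine gap. It is not true that the index $i_0$ witnessing $\lambda_{i_0}(\bar p_{i_0})>0$ keeps this property under the selection $\vec s^\sigma$, i.e.\ the claimed inequality $\lambda_{i_0}(s^\sigma_{i_0})\ge\lambda_{i_0}(\bar p_{i_0})>0$ can fail. Concretely, take a block $\{u,i_0,v\}$ of vertices in $S^<(\vec x^k)$ sharing a value $c$ with $0<c<\norm{\vec x^k}$, edges $\{u,i_0\}$ and $\{i_0,v\}$ of weight one, no other edge at $i_0$, while $u$ has an edge of weight $3$ to a vertex of larger value and $v$ an edge of weight $3$ to a vertex of smaller value. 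Then $p_{i_0}=0$, $q_{i_0}=2$, $\bar p_{i_0}=2$, so $\lambda_{i_0}(\bar p_{i_0})=2(1-c/\norm{\vec x^k})>0$; but $\bar p_u=-4<\bar p_{i_0}<\bar p_v=4$, so every $\sigma\in\Sigma(\vec x^k)$ places $i_0$ strictly between $u$ and $v$ inside the block by \eqref{eq:por}, the two internal contributions cancel, and $s^\sigma_{i_0}=1-1=0$, i.e.\ $\lambda_{i_0}(s^\sigma_{i_0})=0$. (The theorem still holds here because the block extremes witness it: $s^\sigma_u=p_u-q_u=-4\neq0$; your argument, however, never produces them.) Your fallback sentence about ``the extremal index in the block'' is also not correct as stated: for the block-minimal index $j_1$ one always has $s^\sigma_{j_1}-p_{j_1}=-q_{j_1}$, irrespective of $\sign(p_{j_1})$, so its sign need not match $\sign(\bar p_{j_1}-p_{j_1})$; and in any case you give no reason why an extremal index of the block must be a witness.

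That missing reason is exactly the content of the paper's proof. It starts from a witness $i_0$ for the arbitrary subgradient $\vec s$ (by Lemma \ref{lem:rs}(1), $x^k_{i_0}/\norm{\vec x^k}\notin\sgn(s_{i_0})$), forms the block $J=\{j:\,x^k_j=x^k_{i_0}\}$, and considers \emph{both} extremes $j_1,j_2$ of $J$ in the $\sigma$-order, for which $s^\sigma_{j_1}=p_{j_1}-q_{j_1}$ and $s^\sigma_{j_2}=p_{j_2}+q_{j_2}$. Assuming neither is a witness leads to a contradiction: in the case $i_0\in S^<(\vec x^k)$ this forces $s^\sigma_{j_1}=s^\sigma_{j_2}=0$, hence $\bar p_{j_1}\ge 0\ge\bar p_{j_2}$ by \eqref{eq:s_barte}, while the secondary sort gives the sandwich $\bar p_{j_1}\le\bar p_{i_0}\le\bar p_{j_2}$, so $\bar p_{i_0}=0$, whence $p_{i_0}=q_{i_0}=0$ and $s_{i_0}=0$, contradicting the choice of $i_0$; in the case $i_0\in S^\pm(\vec x^k)$ the chain $s_{i_0}\ge p_{i_0}-q_{i_0}=\bar p_{i_0}\ge\bar p_{j_1}=s^\sigma_{j_1}\ge0$ (or its mirror image) contradicts the choice of $i_0$ directly. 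To repair your proof you would need to replace the ``same index $i_0$'' step by this two-extremes contradiction argument (or an equivalent one); the surrounding reductions in your outline are consistent with the paper.
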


\begin{proof} The necessity is obvious, we only need to prove the sufficiency. 

Suppose $\vec s=(s_1,\ldots,s_n)\in \partial I(\vec x^k)$ satisfies $\|\vec s\|>r^k$.
Then it can be derived that $\vec x^k/ \norm{\vec x^k}\notin \sgn(\vec s)$ using the statement (1) of Lemma \ref{lem:rs}, i.e.,  there exists an $i_0\in \set{1,2,\cdots,n}$ such that  
\begin{equation}\label{xikni}
x_{i_0}^k/\norm{\vec x^k}\notin \sgn(s_{i_0}). 
\end{equation}

Denote $J=\{j|\,\,\,x_j^k=x_{i_0}^k\}$. 
Let $j_1,j_2\in J$ be the indexes minimizing and maximizing function $\sigma^{-1}(\cdot)$ over $J$, respectively. Then we have 
\begin{align*}
s^\sigma_{j_1}=&  \sum_{t:\{t,j_1\}\in E} w_{j_1t}\sign(\sigma^{-1}(j_1)-\sigma^{-1}(t))=p_{j_1}-q_{j_1},\\
s^\sigma_{j_2}=&  \sum_{t:\{t,j_2\}\in E} w_{j_2t}\sign(\sigma^{-1}(j_2)-\sigma^{-1}(t))=p_{j_2}+q_{j_2}.
\end{align*}

We claim that either $j_1$ or $j_2$ is an integer $j$ such that $x_j^k\notin \norm{\vec x^k}\sgn(s^\sigma_j)$. This directly yields $\|\vec s^\sigma\|>r^k$ according to the statement (1) of Lemma~\ref{lem:rs}
and thus completes the proof. Suppose the contrary:  
\begin{equation}\label{eq:xikeqxjk}
x_{i_0}^k=x_j^k\in \norm{\vec x^k}\sgn(s^\sigma_j),\,\,\,j=j_1,j_2.
\end{equation}
and split the discussion into the following two cases.

(1) $i_0\in S^<(\vec x^k)$.  

Eq.~\eqref{eq:xikeqxjk} implies that 
$$
s^\sigma_{j_1}=s^\sigma_{j_2}=0 \;\;\; \Leftrightarrow \;\;\; p_{j_1}=q_{j_1}\ge 0, \,\,\,p_{j_2}=-q_{j_2}\le 0,
$$
and thus, from Eq.~\eqref{eq:s_barte}, we have
\begin{align*}
\bar{p}_{j_1}&=p_{j_1}+\sign(p_{j_1})q_{j_1}=p_{j_1}+\sign(p_{j_1})p_{j_1}\ge0,\\
\bar{p}_{j_2}&=p_{j_2}+\sign(p_{j_2})q_{j_2}=p_{j_2}-\sign(p_{j_2})p_{j_2}\le0.
\end{align*}
Since $(x_{j_1},\bar{p}_{j_1})\le (x_{i_0},\bar{p}_{i_0})\le(x_{j_2},\bar{p}_{j_2})$ and $x_{j_1}=x_{i_0}=x_{j_2}$, it yields
\[
0\ge\bar{p}_{j_2}\ge\bar{p}_{i_0}\ge \bar{p}_{j_1}\ge 0\;\;\; \Rightarrow\;\;\; \bar{p}_{i_0} = 0
\;\;\; \Rightarrow\;\;\; p_{i_0}=q_{i_0}=0 \;\;\; \Rightarrow\;\;\; s_{i_0}=0, 
\]
which contradicts Eq.~\eqref{xikni}.

(2) $i_0\in S^\pm(\vec x^k)$. 

Eq.~\eqref{eq:xikeqxjk} implies that  
\begin{equation}
\begin{cases}
s_{i_0} \ge p_{i_0}-q_{i_0}=\bar{p}_{i_0}\ge \bar{p}_{j_1} =s^\sigma_{j_1}\ge 0,\text{ if } i_0\in S^+(\vec x^k),\\
s_{i_0} \le p_{i_0}+q_{i_0}=\bar{p}_{i_0}\le \bar{p}_{j_2} =s^\sigma_{j_2}\le 0,\text{ if } i_0\in S^-(\vec x^k),
\end{cases}
\end{equation}
both of which contradict Eq.~\eqref{xikni}.
\end{proof}


In a word, we choose $\vec s^k = \vec s^\sigma \in \partial I(\vec x^k)$ with $\sigma\in\Sigma(\vec x^k)$ in the simple iterative scheme \eqref{iter1}. Besides the above-mentioned strict increasing, we are able to show below that such subgradient selection assures finite-step local convergence.
Before that, we would like to further specify the choice of $\vec x^{k+1}$ from the closed solution set $X_p^{k+1}$ at the first step. There is a natural isomorphism $h$ by a central projection between $S_p$ and $S_\infty$ which are the unit spheres in norms $p$ and $\infty$, respectively. Denote $\partial X_p^{k+1}\in X_p^{k+1}$ be {\em the collection of points corresponding to the vertices of $h(X_p^{k+1})$ which is a convex polytope},  in view of the fact that the convex function $I(\vec x)$ achieves its maximum values on  vertices among $h(X_p^{k+1})$.
Hence, the three-step iterative scheme \eqref{iter1} can be crystallized into
%
%
%
%
%
%
%
\begin{subequations}
\label{iter1c}
\begin{numcases}{}
\vec x^{k+1} \in \partial X_p^{k+1}, \label{eq:twostep_x2c}
\\
r^{k+1}=F(\vec x^{k+1}),
\label{eq:twostep_r2c}
\\
\vec s^{k+1} = \vec s^\sigma, \,\,\, \sigma\in \Sigma(\vec x^{k+1}).
\label{eq:twostep_s2c}
\end{numcases}
\end{subequations}

Let 
\begin{equation}\label{eq:C}
C = \{\vec x\in\mathbb{R}^n \big| F(\vec y)\leq F(\vec x), \,\, \forall\,\vec y\in\{T_i\vec x: i\in\{1,\ldots,n\}\}\},
\end{equation}
where $T_i\vec x$ is defined as
\begin{equation}\label{eq:Tialp}
(T_i\vec x)_j =
\begin{cases}
x_j, & j\ne i,\\
-x_j, & j=i.
\end{cases}
\end{equation}


%

\begin{theorem}[finite-step local convergence]
\label{thm:conver_3}
Assume the sequences $\{\vec x^k\}$ and $\{r^k\}$ are generated by the simple iterative scheme \eqref{iter1c} from any initial point $\vec x^0\in \mathbb{R}^n\setminus \{\vec0\}$. There must exist $N\in\mathbb{Z}^+$ and $r^*\in\mathbb{R}$ such that, for any $k>N$, 
$r^k=r^*$, and $\vec x^{k+1}\in C$ are local maximizers.

\end{theorem}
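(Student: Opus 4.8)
The plan is to combine three ingredients already at hand: the monotonicity of $\{r^k\}$ (Theorem~\ref{thm:conver_2}), the strict-increase mechanism powered by the subgradient selection $\vec s^\sigma$ (Theorem~\ref{prop:S123}), and the fact that the whole dynamical system is, after quotienting out the scaling and the finitely many sign flips, a \emph{finite-state} process. First I would argue that $\{r^k\}$ is bounded above by $r_{\max}$ and nondecreasing, hence convergent; call the limit $r^*$. The real content is to promote ``convergent'' to ``eventually constant''. For this I would show that the pair $(\vec x^k, \vec s^k)$, up to the equivalence relations that leave $F$ and $\partial I$ invariant, takes only finitely many values. Indeed $\vec s^k = \vec s^\sigma$ is determined by a permutation $\sigma\in\Sigma(\vec x^k)$, so $\vec s^k$ lives in a finite set; and $\vec x^{k+1}\in\partial X_p^{k+1}$ is one of the finitely many vertices of the polytope $h(X_p^{k+1})$, and $X_p^{k+1}$ itself depends only on $(r^k,\vec s^k)$ through the explicit formulas of Theorem~\ref{Thm:exact_solution} (via the ordering of $|\vec s^k|$ and the integers $m_0$, $m_1$). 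So the number of distinct ``configurations'' $(\vec x^k,\vec s^k)$ that can ever be visited is finite.

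Next I would invoke Theorem~\ref{prop:S123} together with Lemma~\ref{lem:rs} and Corollary~\ref{cor:nondec}: if at step $k$ we have $\|\vec s^k\|_1 > r^k$, then by the chain \eqref{eq:rsr} the increase is strict, $r^{k+1} > r^k$. Since $\{r^k\}$ converges, strict increases can happen only finitely often, so there is $N_1$ with $\|\vec s^k\|_1 = r^k$ for all $k > N_1$. By statement~(1) of Lemma~\ref{lem:rs}, this forces $\vec x^k/\norm{\vec x^k}\in\sgn(\vec s^k)$ for $k > N_1$, i.e. $\vec x^k/\norm{\vec x^k} \in \{1,-1\}^n$ is a genuine cut vector, and moreover $I(\vec x^{k+1}) = (\vec x^{k+1},\vec s^k)$ by \eqref{eq:ieq}, so $r^{k+1} = r^k$; combined with the finiteness of the configuration space this already gives $r^k = r^*$ for all $k > N$ for some $N\ge N_1$. (If one wants to be careful, the finite state space means the sequence is eventually periodic, and monotonicity of $r^k$ plus $r^{k+1}=r^k$ on the cycle pins $r^k$ to $r^*$.)

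Finally, to show $\vec x^{k+1}\in C$ for $k>N$: fix such a $k$ and let $\vec x = \vec x^{k+1}$, so $\vec x/\norm{\vec x}$ is a $\pm1$ cut vector with $F(\vec x) = r^*$. For each $i$, $T_i\vec x$ is again a $\pm1$-valued vector, so $\frac12 F(T_i\vec x) = \cut(S_i)$ for the associated set $S_i$; I must show $F(T_i\vec x)\le F(\vec x) = r^*$. Here I would use that $\vec x$ arises as a \emph{vertex} solution $\partial X_p^{k+1}$ of the inner problem for data $(r^k,\vec s^k) = (r^*,\vec s^k)$ with $\vec x/\norm{\vec x}\in\sgn(\vec s^k)$: flipping one coordinate $x_i$ changes $r^*\norm{\vec x}-(\vec x,\vec s^k)$ by a nonnegative amount (it can only leave the optimal face or move off it), which via the Dinkelbach identity $r^*\norm{\cdot}-I(\cdot)\ge 0$ and $I(T_i\vec x)\ge (T_i\vec x,\vec s^k)$ translates into $F(T_i\vec x)\le r^*$. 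The main obstacle I expect is precisely this last step: making rigorous that the ``eventual'' $\vec x^{k+1}$ is locally optimal under single-coordinate flips, since one must track carefully how the explicit vertex selection in Scenarios~1--3 interacts with the sign-flip operators $T_i$ and with the fact that $r^k$ has already stabilized; the earlier scenarios' formulas and the equality case of H\"older's inequality in Lemma~\ref{lem:rs} are the tools to push this through, but the bookkeeping of which coordinates sit in $S^\pm$ versus $S^<$ is where the proof has to be done with care.
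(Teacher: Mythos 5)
Your overall skeleton (monotone increase, quantized jumps, the characterization $r^k=\|\vec s^k\|_1$ via Lemma~\ref{lem:rs}, then Scenario~3 structure) matches the paper, but two of its load-bearing steps have genuine gaps. First, the eventual constancy of $r^k$: your ``finite configuration space'' claim is false for $1<p<\infty$, since by Theorem~\ref{Thm:exact_solution} (Scenario~1) the vertex $\vec x^{k+1}$ has coordinates $\min\{1,a_i^{1/(p-1)}\}$ with $a_i$ depending continuously on $r^k$, so the reachable pairs $(\vec x^k,\vec s^k)$ form a finite set only if one already knows $r^k$ takes finitely many values --- which is exactly what is to be proved; and ``since $\{r^k\}$ converges, strict increases can happen only finitely often'' is a non sequitur (a convergent increasing sequence may increase strictly at every step). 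The paper's actual mechanism is the sandwich in Eq.~\eqref{eq:rsr}: $r^k\le\sum_{i\in\iota}|s_i^k|\le r^{k+1}$, where the middle term ranges over the \emph{finite} set of partial sums of components of the finitely many subgradients $\vec s^\sigma$; monotonicity then forces this middle value to stabilize and hence $r^k=r^*$ for all large $k$. Relatedly, once $r^k=\|\vec s^k\|_1$, membership of $\vec x^{k+1}/\|\vec x^{k+1}\|_\infty$ in $\sgn(\vec s^k)$ does \emph{not} make it a $\pm1$ vector (coordinates with $s_i^k=0$ are unconstrained); you need the vertex selection $\vec x^{k+1}\in\partial X_p^{k+1}$ to conclude $S^<(\vec x^{k+1})=\varnothing$, i.e.\ Eq.~\eqref{eq:x2}.

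Second, your plan for $F(T_i\vec x)\le F(\vec x)$ uses inequalities pointing the wrong way: the subgradient inequality gives only the lower bound $I(T_i\vec x)\ge(T_i\vec x,\vec s^k)$, and the ``Dinkelbach identity'' $r^*\|\cdot\|_\infty-I(\cdot)\ge0$ is not available at the stabilized value $r^*$ (it would force $r^*=r_{\max}$, contradicting that SI generally lands at local optima); what stabilization yields is only $r^*\|\vec x\|_\infty\ge(\vec x,\vec s^k)$ for all $\vec x$, which cannot upper bound $I(T_i\vec x)$. The paper's key point, which your sketch lacks, is that after stabilization Theorem~\ref{prop:S123} combined with Lemma~\ref{lem:rs}(1) upgrades the sign condition to \emph{every} subgradient, Eq.~\eqref{eq:x1}; then if $F(T_i\vec x)>F(\vec x)$ one computes the flip difference exactly, $I(T_i\vec x)-I(\vec x)=\pm2\sum_{j:\{i,j\}\in E}w_{ij}x_j>0$, and exhibits the particular subgradient with $z_{ij}=-x_j/\|\vec x\|_\infty$, giving $x_is_i<0$ and a contradiction with Eq.~\eqref{eq:x1}. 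Finally, the theorem also asserts that $\vec x^{k+1}$ is a local maximizer of $F$ over a full neighborhood in $\mathbb{R}^n\setminus\{\vec 0\}$, which your proposal does not address; the paper handles it by showing $g(t)=I(t(\vec y'-\vec x)+\vec x)$ is linear on $[0,1]$ with slope $(\vec s,\vec y'-\vec x)\le0$ for a suitable $\vec s\in\partial I(\vec x)$, again relying on Eqs.~\eqref{eq:x1}--\eqref{eq:x2}.
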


\begin{proof}
According to Eq.~\eqref{eq:rsr}, 
for any integer $k>0$,
there exists   $\iota\subset \{1,\ldots,n\}$ such that
$r^k\le \sum_{i\in \iota}|s_i^k|\le r^{k+1}$, which means the sequence $\{r^k\}$ can only take finite values because 
the set 
$$
\left\{\sum_{i\in \iota}|s_i|\Big|\,\,\,\vec s^\sigma=(s_1,\ldots,s_n),\,\,\,\forall\,\sigma\in \Sigma(\vec x),\,\,\, \forall\, \vec x,\iota\right\}
$$ is finite. 
Thus there exist $N\in\mathbb{Z}^+$ and $r^*\in\mathbb{R}$ such that $r^k=r^*$ for any $k>N$, thereby implying that 
\begin{align*}
r^k &=\|\vec s^k\|_1, \\
X_p^{k+1}&=\{\vec x\,\big|\,\vec x/ \|\vec x\|_\infty\in\sgn(\vec s^k),\,\,\,\, \|\vec x\|_p = 1\},
\end{align*}
by virtue of Lemma~\ref{lem:rs}. 
That is, $\forall\,\vec x^{k+1}\in\partial X_p^{k+1}$, we have
\begin{align}
\vec x^{k+1}/ \|\vec x^{k+1}\|_\infty&\in\sgn(\vec s), \quad \forall\, s\in\partial I(\vec x^{k+1}), 
 \label{eq:x1}
\\
S^<(\vec x^{k+1})&=\varnothing. \label{eq:x2}
\end{align}
%
%
%

Now we will show $\vec x^{k+1}\in C$ and neglect the superscript $k+1$ hereafter for simplicity. 
Suppose the contrary that there exists $i\in S^\pm(\vec x)$ satisfying $F(T_i\vec x)>F(\vec x)$,
and then we have 
\begin{equation}
\|T_i\vec x\|_\infty=\|\vec x\|_\infty, \quad 
I(T_i\vec x)-I(\vec x)  = \pm\sum_{j:\{j,i\}\in E} w_{ij}2 x_j>0. 
\end{equation} 
Let $\vec s=(s_1,\ldots,s_n)\in\partial I(\vec x)$ be generated by
$$
z_{ij} =-x_j/\norm{\vec x}\in \sgn(x_i-x_j),
$$
and thus 
\begin{equation*}
x_is_i=x_i\sum_{j:\{j, i\}\in E} w_{ij}z_{ij} = -\sum_{j:\{j, i\}\in E} w_{ij}x_ix_j/\norm{\vec x}=-\frac{1}{2}(I(T_i\vec x)-I(\vec x))<0,
\end{equation*}
which contradicts Eq.~\eqref{eq:x1}.


Finally, we want to show $\vec x$ is a local maximizer of $F(\cdot)$ on $\mathbb{R}^n\setminus \{\vec0\}$.
Let $U$ be  a neighborhood of $\vec x$ such that  
$$
\|\frac{\vec y}{\norm{\vec y}}-\frac{\vec x}{\norm{\vec x}}\|_\infty<\frac12,\,\,\,\forall\,\vec y\in U,
$$ 
and 
$$
\vec y'=\frac{\norm{\vec x}}{\norm{\vec y}}  \vec y, \quad 
g(t)=I(t(\vec y'-\vec x)+\vec x), \quad \forall\,\vec y\in U.
$$
We claim 
\begin{equation}\label{eq:g}
g(t)-g(0)\leq 0, \quad \forall\,t\in[0,1],
\end{equation}
with which we are able to verify $\vec x$ is a local maximizer as follows 
\begin{align*}
F(\vec y)-F(\vec x)=&F(\vec y')-F(\vec x)=\frac{1}{\|\vec x\|_\infty}(I(\vec y')-I(\vec x))\\
=&\frac{1}{\|\vec x\|_\infty}(g(1)-g(0))\le 0,
\quad \forall\,\vec y\in U.
\end{align*}

The only remaining thing is to prove Eq.~\eqref{eq:g}. 
First, it can be easily shown that $g(t)$ is linear on $[0,1]$, and there exists $\vec s=(s_1,\ldots,s_n) \in\partial I(\vec x)$ such that its slope can be determined by 
$$
g(t)-g(0)=t(\vec s,\vec y'-\vec x).
$$
Then, the verification of Eq.~\eqref{eq:g} can be completed by 
$$
x_j (y'_j-x_j)\le 0\Rightarrow s_j(y'_j-x_j)\le 0, \quad  j=1,2,\ldots,n, 
$$
where we have used $\|\vec y'\|_\infty=\|\vec x\|_\infty$ as well as Eqs.~\eqref{eq:x1} and \eqref{eq:x2}. 
%
\end{proof}
\begin{remark}
Notice that $\{T_i\}_{i=1}^n$ can generate a commutative group $\mathcal{T}$ on $\mathbb{R}^n$. 
If we further restrict its domain to be $\{\vec x\,\,\big|S^<(\vec x)=\varnothing\}$, then $\vec x$ is a global maximizer of  $F(\cdot)$ on $\mathbb{R}^n\setminus \{\vec0\}$ if and only if $F(\vec y)\le F(\vec x)$ holds for any $\vec y=T\vec x$, $\forall\,T\in\mathcal{T}$. In such sense, the set $C$ given in \eqref{eq:C} is the first order approximation to global maximizers. 
\end{remark}


\section{Numerical experiments}
\label{sec:Numer-exper}

In this section, we conduct performance evaluation of the proposed SI algorithm \eqref{iter1c} on the graphs with positive weight in G-set (available from: http://www.stanford.edu/yyye/yyye/Gset), and
always set the initial data $\vec x^0$ to be the maximal eigenvector of the graph Laplacian \cite{DelormePoljak1993,PoljakRendl1995}. The three bipartite graphs $G48$, $G49$, $G50$ will not be considered because their optima cuts can be achieved at the initial step.
The {recently updated cut values achieved by a multiple search operator heuristic} are chosen to be the reference \cite{MaHao2017}.
SI is capable of producing approximate cuts with high quality: 
the ratios between the resulting cut values and the reference ones are at least  $0.986$ (see Tab.~\ref{tab1}),
and can be improved to $0.997$ (see Tab.~\ref{tab:adl}) after introducing a straightforward perturbation to break out of local optima.



%

\subsection{Implementation and cost analysis}

We begin with the algorithm implementation plus a preliminary cost analysis.
Alg.~\ref{alg1} gives the pseudo-code of the SI algorithm \eqref{iter1c},
where $\vec x^{k+1}$, $r^{k+1}$ and $\vec s^{k+1}$ are 
generated in Line~\ref{alg:es},  Line~\ref{alg:F} and Lines~\ref{alg:q}-\ref{alg:sg}, respectively.
In Line~\ref{alg:es}, the function $\Call{exact\_solution}{}$ randomly selects the iteration point $\vec x^{k+1}\in \partial X_p^{k+1}$ where the exact solution set $ X_p^{k+1}$ is given in Theorem \ref{Thm:exact_solution}.
After $\bar{\vec p}$ is obtained in Line~\ref{alg:pbar} using Eq.~\eqref{eq:s_barte},
we are able to arrive at the partial order \eqref{eq:por} through the Bubble Sort procedure
during which the requested subgradient $\vec s^{k+1}$ can be automatically updated in an iterative manner (see Lines~\ref{alg1:s1} and \ref{alg1:s2}). 
This constitutes the subroutine named by $\Call{subgradient}{}$ which starts from Line~\ref{alg:sgs}.
It should be pointed out that there is randomness in determining both $\vec x^{k+1}$ and $\vec s^{k+1}$,
so that the performance evaluation below is conducted in the sense of average
by re-running SI \eqref{iter1c} 100 times from the same initial data.
A preliminary estimate of the cost for three iteration steps $T=50,500,2000$
is presented in Tab.~\ref{tab:epsilon} where we have set $p=2$ for instance.


%


The main task for reaching the exact solution set $X_p^{k+1}$ by Theorem \ref{Thm:exact_solution} is to determine $m_0$ 
with which $m_1$ can be automatically obtained by Eq.~\eqref{eq:order2}.
In our implementation, $\Call{exact\_solution}{}$ uses the Bubble Sort to arrange $\vec s^k$
in an ascending order such that the accumulation of increment $A(n),A(n-1),\ldots,A(1)$ is calculated, successively, until $m_0(k)$ (i.e., $m_0$ at the $k$-step iteration) can be determined via Eq.~\eqref{eq:defm0}. The cost of this procedure is $\mathcal{O}((n-m_0(k))n)$.
Tab.~\ref{tab:epsilon} shows that the mean values of $(n-m_0(k))/n$ are far less that $1$,
which are at most  $0.060,0.048,0.047$ for $T=50,500,2000$, respectively.
In particular, $\Call{exact\_solution}{}$ reduces to {\bf Scenario 3} and thus only costs $\mathcal{O}(n)$ for $p=\infty$.

The subroutine $\Call{subgradient}{}$ is also a Bubble Sort procedure and costs $\mathcal{O}(n^2)$ in worst cases. However, it should be pointed out that the efficiency of Bubble Sort depends on the initial order and actually costs  $\mathcal{O}((\delta_\sigma(k)+1)n)$, where 
\begin{equation}\label{eq:epsilonk} 
\delta_\sigma(k)=\frac{\sum_{i=1}^n|\sigma^{k+1}(i)-\sigma^{k}(i)|}{2n}
\end{equation}
denotes an average displacement of the permutation $\sigma$ used in Eq.~\eqref{eq:por} for the $k$-th iteration. 
Tab.~\ref{tab:epsilon} shows that the ratios between $\mean(\delta_\sigma(k))$ and $n$ are at most $0.03$.


It remains to estimate the cost for updating $r,\vec q,\vec p$ in Lines~\ref{alg:F}-\ref{alg:p},
where we prefer to only calculate the increment: 
\begin{equation}\label{eq:deltag}
\delta_\Gamma(\vec x^{k+1},\vec x^k):=\Gamma(\vec x^{k+1})-\Gamma(\vec x^k),\,\,\,\Gamma\in \{F,\vec q,\vec p\}.
\end{equation}
 Let 
\begin{equation}
\vec z^k= \frac{\vec x^k}{\norm{\vec x^k}},\,\,\,
\vec z^{k,i}=(z^{k,i}_1,\ldots,z^{k,i}_n) = (z_1^{k+1},\ldots, z_i^{k+1},z_{i+1}^k,\ldots, z_{n}^k).
\end{equation}
Then we have
\begin{align}
\delta_\Gamma(\vec x^{k+1},\vec x^k) &=\delta_\Gamma(\vec z^{k+1},\vec z^k)=\sum_{i=1}^n\delta_\Gamma(\vec z^{k,i},\vec z^{k,i-1})\\
&=\sum_{i\in V(k)}\delta_\Gamma(\vec z^{k,i},\vec z^{k,i-1}),\;\;\;\Gamma\in\{F,\vec q,\vec p\}\label{eq:deltagamma}
\end{align}
where
\begin{equation}
\label{eq:Vk}
V(k)= \{i\,\,\big|\,\,\, z_i^{k+1}\ne z_i^k\}.
\end{equation}
Consequently, the complexity of calculating $\delta_\Gamma(\vec x^{k+1},\vec x^k)$ is $\mathcal{O}(|V(k)| n)$ 
since we need $\mathcal{O}( n)$ to produce each component $\delta_\Gamma(\vec z^{k,i},\vec z^{k,i-1})$.
Tab.~\ref{tab:epsilon} reveals that $\mean(|V(k)|)$ is much smaller than $n$ and sometimes 
vanishes as $T$ increases. 

 
In total, the SI algorithm costs $\mathcal{O}( c(k) n)$ in the $k$-th step and $c(k) := (n-m_0(k))+\delta_\sigma(k)+|V(k)|$
depends on the underlying graph like its weights and order. Actual numerical experiments in Tab.~\ref{tab:epsilon} show that the mean value of $c(k)$ is much smaller than $n$. 

\subsection{Quality check}
\label{sec:Qc}


We are now ready for quality check of numerical solutions achieved by the simple algorithm \eqref{iter1c}.
The numerical results for the RSC algorithm based on graph Laplacian ($\Delta_2$-RSC) \cite{th:Ottaviano2008} and graph $1$-Laplacian ($\Delta_1$-RSC) \cite{ChangShaoZhang2016-maxcut}, as well as the GW algorithm \cite{th:Ottaviano2008}
are adopted for comparison.


The quality check is performed based on numerical solutions until $T=2000$. 
Tab.~\ref{tab1} shows the minimum, mean and maximum cut values during $100$ runs for $p=1,2,\infty$.
It can be easily seen there that the results for different  $p$ $(=1,2,\infty)$ are comparable and
are all very close to the reference values.
Actually, the ratios between the best cut values by SI (chosen from the maximum cut values over $p=1,2,\infty$) and the reference ones
are at least  $0.986$ (see the results for G36),
while the numerical lower bound for such ratios is about $0.946$,
$0.933$ and $0.949$ for the GW, $\Delta_2$-RSC
and $\Delta_1$-RSC algorithms, respectively. 
In particular, for the case of $p=\infty$, the ratios between the minimum, mean, maximum cut values and the reference ones
are at least  $0.979$, $0.982$, $0.986$, respectively,
all of which are larger than the average ratios over these $27$ graphs
for the $\Delta_1$-RSC ($\simeq 0.971$), GW ($\simeq 0.960$), $\Delta_2$-RSC ($\simeq 0.958$), 
and SC ($\simeq 0.951$) algorithms.
The SC cuts are obtained by rounding the initial data with a threshold of $0$ and the cut values are shown in the second column of Tab.~\ref{tab1}.
In order to further show the overall performance in achieving high ratio by the SI algorithm, 
we plot the histogram of the ratios for all $3\times2700$ runs in Fig.~\ref{fig2}.
We are able to clearly observe there that, 
(1) more than $95\%$ of runs achieve ratios exceeding $0.986$ (see the black vertical line of Fig.~\ref{fig2}); 
(2) the percent of runs obtained a ratio larger than $0.986$ exceeds $72\%$.

%
%
%
%
 

 \subsection{Breaking out of local optima}
\label{sec:mjlo}



Within the SI algorithm, we are allowed to plug into local breakout techniques to further improve the solution quality. 
A preliminary attempt is to generate a new point $\tilde{\vec x}^{k+1}=(\tilde x_1,\ldots,\tilde x_n)$ in a stochastic  manner: 
 \begin{equation}\label{eq:pertu}
 \tilde x_i=
 \begin{cases}
 -x_i^k,\text{ with the probability of $e^{-\beta|\bar p^k_i|}$,}\\
 x_i^k,\text{ with the probability of $1-e^{-\beta|\bar p^k_i|}$,}
 \end{cases}
 \end{equation}
 when SI is stuck at $\vec x^k=(x_1^k,\ldots,x_n^k)$,
 namely, it cannot make the function $F(\cdot)$ increase. 
 Here we choose $\beta$ to be a controllable parameter, 
 and $\bar{\vec p}^k = (\bar p_1^k,\ldots,\bar p_n^k)$ to generate $\tilde{\vec x}^{k+1}$ in order to decrease $F(\cdot)$ (if any) as little as possible 
 in view of the following fact:
\begin{equation}
F(T_i\vec x^k)-F(\vec x^k)=-|\bar p_i^k|, \quad i=1,\ldots,n,
\end{equation}
where $T_i$ is defined in Eq.~\eqref{eq:Tialp}. 
In such sense, we regard the manner \eqref{eq:pertu} as a special kind of perturbation,
and the resulting algorithm is named by the simple iteration with perturbation (SI-P).

Alg.~\ref{alg:ipwp} presents the pseudo-code of SI-P. In Lines~\ref{alg:ips}-\ref{alg:ipe}, 
$\Call{si\_perturb}{}$ undergoes the simple iteration equipped with jumping out of local optima until a given final time $T$
during which the perturbation \eqref{eq:pertu} triggers with a prescribed $\beta$ if the function values remain unchanged for $t$ steps (see Lines~\ref{trigger0}-\ref{trigger1}). It should be noted that $\Call{si\_perturb}{}$ returns the maximal cut value $r_{opt}$ and the corresponding point $\vec x_{opt}$ during the period of $T$. 
We are now left only to choose $\beta$ which should not only depend on the iteration time but also have a specific range because $\tilde{\vec x}^{k+1} = -\vec x^k$ when $\beta=0$ and $\tilde{\vec x}^{k+1} = \vec x^k$ when $\beta\to \infty$.
Here Alg.~\ref{alg:ipwp} adopts a naive way via an outer loop in Lines~\ref{alg:whiles}-\ref{alg:whilee},
and takes $L$ runs of $\Call{si\_perturb}{}$ with different $\beta$ randomly chosen from $(0,1)$ 
within each turn of loop. This outer loop continues until the cut value stops increasing (see Lines~\ref{break0}-\ref{alg:loope}) and the variable $count$ records the total number of turns (see Line~\ref{code:count}).


Tab.~\ref{tab:adl} shows the numerical results by SI-P with $t=3$, $T=2000$, $L=20$, and $p=\infty$. 
Now the cut values are all increased for those $27$ problem instances in G-set
and the ratios between the best cut values and {the reference ones}
become at least  $0.997$. Moreover, 
the complexity of the function $\Call{si\_perturb}{}$ is almost the same as SI. 
Therefore, SI-P calls $\Call{si\_perturb}{}$ $count\times L$ times and performs $count\times L\times T$ iterations in total.
This is the price we should pay for the improved cut values,
which is at most $17 \times 20\times 2000=680000$ iteration steps in the numerical experiments on G-set
(see the last column of Tab.~\ref{tab:adl}).



\subsection{Comparison with CirCut}
\label{sec:circut}

Finally, we compare SI in Alg.~\ref{alg1} and $\Call{SI\_PERTURB}{}$ in Alg.~\ref{alg:ipwp} with the primal CirCut algorithm given in Alg.~1 of \cite{BurerMonteiroZhang2001}
which does not invoke any local search techniques. For convenience, we adopt the same notations as those used in \cite{BurerMonteiroZhang2001} unless otherwise specified. It should be noted that CirCut 
do need a simple gradient algorithm with a backtracking Armijo line-search to minimize the nonconvex objective function $f(\theta)$  from $\theta^0$ instead of calling an external solver. The stopping condition for the line-search is,  
either the relative change in $f(\theta)$ is less than $\epsilon_f$ or the relative change in its gradient is less than $\epsilon_g$.
We use $T_f$ (resp. $T_g$) to count the total number of times $f(\theta)$ (resp. $\nabla f(\theta)$) has been evaluated during the line-search. 
CirCut stops until $N$ consecutive random perturbations cannot improve the approximate cut, and let $T_P$ count the total number of perturbations. For a fair comparison, we extract the first two steps: line-search and Procedure-CUT, to form a pure rank-two relaxation for maxcut, abbreviated as CirCut0, which excludes the perturbation step,  and set it against SI. When the random perturbation is invoked,  CirCut is compared with $\Call{SI\_PERTURB}{}$.  For a given graph,  one usually runs the algorithm $M$ times with multiple random starting points: $\theta^0\sim \mathcal{U}(0,2\pi)$, and $\bar{T}_\gamma$ gives the average value of $T_\gamma$ over these $M$ times for $\gamma\in \{f, g, P\}$. 
The iteration of SI is not stoped until the cut values remain unchanged for $t$ consecutive steps and we run $\Call{SI\_PERTURB}{}$  $\bar{T}_P$ times staring from a single initial data where $T_S$ counts the total number of iterations. Both SI and $\Call{SI\_PERTURB}{}$  are re-run $M$ times from the same initial data given by the maximal eigenvector of the graph Laplacian where $\bar{T}_S$ denotes the average value of $T_S$. 

We set $N=10$, $M=20$, $t=3$ and use the implementation of CirCut available at Github\footnote{see {\tt https://github.com/MQLib/MQLib/tree/master/src/heuristics/maxcut}} where the parameters for the line-search are: the maximum number of rounds $n_{\max}=200$,  and the tolerances $\epsilon_f=\epsilon_g=1$e-4. Fig.~\ref{fig3} plots the minimum, mean and maximum cut values (normalized by the number of edges $|E|$) from the simulations with the $M$ starting points. 
We are able to observe there that the quality of SI solutions is much better than CirCut0,
and the approximate cuts produced by SI$\_$PERTURB are of comparable quality to CirCut with the same number of perturbations.  

According to Theorem~\ref{thm:conver_2} and Theorem~\ref{thm:conver_3}, the cut values obtained by SI are monotonically 
updated, and the iterative solution  converges to a  local optimum from any given initial data. 
So it will be interesting to see whether SI can improve the output cuts of CirCut0 and the results are displayed in Fig.~\ref{fig3}(a) with dot lines and legend ``CirCut0+SI'' where the maximum number of SI iterations is set to be $100$. It is shown there that SI does improve the quality of solutions obtained by CirCut0, 
which clearly indicates that CirCut0 is not guaranteed to get local optimum. 
Almost the same story happens with CirCut+SI (see Fig.~\ref{fig3}(b)).
On the other hand, we should point out that CirCut0 cannot improve the quality of the solution produced by SI. In fact, the solution obtained by SI must be a cut, while, by Theorem 3.4 in \cite{BurerMonteiroZhang2001}, any cut  corresponds to a critical  point of $f(\theta)$, which means that the stopping condition  $\nabla f(\theta)\approx \vec0$ in the line-search is immediately satisfied, and there is obviously no way to improve it further.

\blue{We run all above-mentioned algorithms in Matlab (r2019b) on a High-Performance Computing Platform: 2*Intel Xeon E5-2650-v4 (2.2GHz, 30MB Cache, 9.6GT/s QPI Speed, 12 Cores, 24 Threads) with 128GB Memory. In Tab.~\ref{tab:circutcost}, we list the values of $\bar{T}_\gamma$ with $\gamma\in \{f, g, S, P\}$ as well as the average  wall-clock time in seconds needed by only one thread for each run}. In each step of the line-search, it is required to calculate $f(\theta)$ and its gradient 
the complexity of which is \blue{$\mathcal{O}(n^2)$} and does not change significantly as the search proceeds.
\blue{Then, it can be deduced that the run time of both CirCut0 and CirCut should be roughly proportional to $\bar{T}_f + \bar{T}_g$, which can be readily verified in Tab.~\ref{tab:circutcost}.} 
By comparison, thanks to its monotonicity in Theorem~\ref{thm:conver_2} and local adjustability in  Theorem~\ref{thm:conver_3}, the complexity of one iteration step decreases as SI goes on,
and its average over all steps is \blue{$\mathcal{O}(\mean(c(k) )n)$ and usually much less than $\mathcal{O}(n^2)$ as already shown in Tab.~\ref{tab:epsilon}}. \blue{There are two important implications. One is that the time ratio between SI and CirCut0 (or CirCut) should be roughly proportional to $\bar{T}_S/(\bar{T}_f + \bar{T}_g)$ where $\bar{T}_S$ for SI is not so large (less than $50$).
This can be easily checked in Tab.~\ref{tab:circutcost}. The other is the run time of SI$\_$PERTURB should grow much more slowly as the iteration goes on where $\bar{T}_S$ is always larger than $500$,  
which can be seen by comparing the time between SI and SI$\_$PERTURB in Tab.~\ref{tab:circutcost}.
Hence we are able to tell that SI and SI$\_$PERTURB require much fewer iterations and thus run much faster than CirCut0 and CirCut, respectively.} That is, although both SI and CirCut do not need call any external solver, quickly locating and checking stationary points of $f(\theta)$ as many as possible in CirCut is not that simple.

\section{Conclusion and outlook}
\label{sec:conclusion}




An equivalent continuous fractional optimization problem and a simple iterative (SI) algorithm going from one cut to another in a monotonic and rounding-free way for the maxcut problem were proposed. `{\em Simple}' means SI utilizes the exact solutions of the inner subproblems. Numerical experiments on G-set demonstrated that the continuous SI algorithm can produce more qualified solutions than all other existing continuous algorithms. The underlying guiding thought is to build a firm bridge between discrete data world and continuous math field and then use it to design more efficient algorithms. Introducing more advanced combinatorial heuristics into SI and further improving the quality of solutions are on the way. Our attempts on the maxcut problem may provide a valuable reference for other combinatorial problems and fractional programming problems.

\clearpage

\begin{algorithm}

\caption{\small Pseudo-code of the simple iterative (SI) algorithm.}
\begin{algorithmic}[1]
\State Initialize $\vec x^0$, $r^0$, $\vec q^0$, $\vec p^0$, $\sigma^0$, $\vec s^0$, $T$
\State $k \gets 0$
\While{$k<T$}
\State $\vec x^{k+1}\gets \Call{exact\_solution}{\vec s^k,r^k}$\label{alg:es}
\State $r^{k+1} \gets r^k+\delta_F(\vec x^{k+1},\vec x^k)$\label{alg:F}
\State $\vec q^{k+1} \gets \vec q^k +\delta_{\vec q}(\vec x^{k+1},\vec x^k)$\label{alg:q}
\State $\vec p^{k+1} \gets \vec p^k + \delta_{\vec p}(\vec x^{k+1},\vec x^k)$\label{alg:p}
\State $\bar{\vec p}^{k+1}\gets \Call{p\_bar}{\vec x^{k+1},\vec q^{k+1},\vec p^{k+1}}$\label{alg:pbar}
\State $(\vec s^{k+1},\sigma^{k+1}) \gets \Call{subgradient}{\vec x^{k+1},\bar{\vec p}^{k+1},\vec s^{k},\sigma^{k}}$\label{alg:sg}
\State $k\gets k+1$
\EndWhile
\Function{$(\vec s,\sigma)$ =  subgradient}{$\vec x,\bar{\vec p},\vec s,\sigma$}\label{alg:sgs}
\For{$i=1$ to $n-1$}
\For{$j=i+1$ to $2$}
\If{$(\vec x(\sigma(j)),\bar{\vec p}(\sigma(j))) \le (\vec x(\sigma(j-1)),\bar{\vec p}(\sigma(j-1)))$}
\State $\sigma \gets \Call{swap}{\sigma,j-1,j}$
\State $\vec s(\sigma(j-1))\gets \vec s(\sigma(j-1))-2w_{\sigma(j)\sigma(j-1)}$ \label{alg1:s1}
\State $\vec s(\sigma(j))\gets \vec s(\sigma(j))+2w_{\sigma(j)\sigma(j-1)}$ \label{alg1:s2}
\Else ~break
\EndIf
\EndFor
\EndFor
\EndFunction \label{alg:sge}
\end{algorithmic}
\label{alg1}
\end{algorithm}

\clearpage

\begin{table}

\begin{adjustbox}{addcode={\begin{minipage}{\width}}{
\caption{\small Cost analysis:  Mean values of $(n-m_0^k)/n$,  $\delta_\sigma(k)/n$,
    and $|V(k)|/n$ obtained from $100$ runs of the simple iterative (SI) algorithm \eqref{iter1c} with $p=2$.  
    Here $(n-m_0^k)$,  $\delta_\sigma(k)$, and $|V(k)|$ are given in Eq.~\eqref{eq:defm0}, Eq.~\eqref{eq:epsilonk}, and Eq.~\eqref{eq:Vk}, respectively, $n$ denotes the size of graph and $T$ gives the total step of iterations. The time complexity of SI is $\mathcal{O}(\mean(c(k) )nT)$ with $c(k) = (n-m_0(k))+\delta_\sigma(k)+|V(k)|$ depending on the underlying graph.  The values of $\mean(c(k))/n$  are at most $0.095,0.077,0.077$ for $T=50,500,2000$, respectively,
thereby indicating $\mean(c(k))$ is much smaller than $n$. 
}\label{tab:epsilon}\end{minipage} },angle=90,center}
 \centering   
\begin{tabular}{|r|r|rrr|rrr|rrr|}
\hline
\multirow{2}*{\makecell{graph}}&  \multirow{2}*{\makecell{$n$} }& \multicolumn{3}{r|}{\makecell{$\mean(n-m_0^k)/n$}}& \multicolumn{3}{r|}{\makecell{$\mean(\delta_\sigma(k))/n$}}& \multicolumn{3}{r|}{\makecell{$ \mean(|V(k)|)/n$}}\\
\cline{3-11}
    && $T=50$& $T=500$& $T=2000$ & $T=50$&$T=500$& $T=2000$ &$T=50$&$T=500$& $T=2000$ \\
    \hline
    G1    & 800   & 0.0073  & 0.0046  & 0.0045  & 0.0091  & 0.0082  & 0.0083  & 0.0010  & 0.0000  & 0.0000  \\
    G2    & 800   & 0.0069  & 0.0043  & 0.0042  & 0.0090  & 0.0084  & 0.0082  & 0.0008  & 0.0000  & 0.0000  \\
    G3    & 800   & 0.0072  & 0.0048  & 0.0046  & 0.0093  & 0.0081  & 0.0081  & 0.0009  & 0.0000  & 0.0000  \\
    G4    & 800   & 0.0061  & 0.0042  & 0.0040  & 0.0090  & 0.0082  & 0.0081  & 0.0061  & 0.0042  & 0.0040  \\
    G5    & 800   & 0.0059  & 0.0045  & 0.0043  & 0.0089  & 0.0081  & 0.0079  & 0.0007  & 0.0000  & 0.0000  \\
    G14   & 800   & 0.0510  & 0.0423  & 0.0412  & 0.0246  & 0.0243  & 0.0251  & 0.0034  & 0.0000  & 0.0000  \\
    G15   & 800   & 0.0550  & 0.0444  & 0.0440  & 0.0248  & 0.0253  & 0.0247  & 0.0068  & 0.0000  & 0.0000  \\
    G16   & 800   & 0.0532  & 0.0445  & 0.0428  & 0.0229  & 0.0237  & 0.0237  & 0.0532  & 0.0445  & 0.0428  \\
    G17   & 800   & 0.0567  & 0.0471  & 0.0461  & 0.0244  & 0.0244  & 0.0244  & 0.0020  & 0.0000  & 0.0000  \\
    G22   & 2000  & 0.0146  & 0.0106  & 0.0102  & 0.0120  & 0.0125  & 0.0120  & 0.0018  & 0.0001  & 0.0001  \\
    G23   & 2000  & 0.0119  & 0.0100  & 0.0099  & 0.0123  & 0.0117  & 0.0118  & 0.0021  & 0.0000  & 0.0000  \\
    G24   & 2000  & 0.0135  & 0.0100  & 0.0097  & 0.0125  & 0.0117  & 0.0120  & 0.0135  & 0.0100  & 0.0097  \\
    G25   & 2000  & 0.0161  & 0.0115  & 0.0111  & 0.0132  & 0.0124  & 0.0122  & 0.0161  & 0.0115  & 0.0111  \\
    G26   & 2000  & 0.0140  & 0.0101  & 0.0097  & 0.0126  & 0.0127  & 0.0127  & 0.0038  & 0.0000  & 0.0000  \\
    G35   & 2000  & 0.0475  & 0.0380  & 0.0370  & 0.0274  & 0.0259  & 0.0255  & 0.0475  & 0.0380  & 0.0370  \\
    G36   & 2000  & 0.0566  & 0.0484  & 0.0466  & 0.0252  & 0.0286  & 0.0278  & 0.0054  & 0.0005  & 0.0000  \\
    G37   & 2000  & 0.0516  & 0.0427  & 0.0414  & 0.0260  & 0.0272  & 0.0271  & 0.0076  & 0.0000  & 0.0000  \\
    G38   & 2000  & 0.0595  & 0.0478  & 0.0466  & 0.0260  & 0.0292  & 0.0302  & 0.0091  & 0.0000  & 0.0000  \\
    G43   & 1000  & 0.0140  & 0.0102  & 0.0100  & 0.0131  & 0.0125  & 0.0122  & 0.0024  & 0.0000  & 0.0000  \\
    G44   & 1000  & 0.0169  & 0.0134  & 0.0133  & 0.0126  & 0.0135  & 0.0129  & 0.0013  & 0.0000  & 0.0000  \\
    G45   & 1000  & 0.0152  & 0.0137  & 0.0135  & 0.0126  & 0.0130  & 0.0132  & 0.0152  & 0.0137  & 0.0135  \\
    G46   & 1000  & 0.0112  & 0.0093  & 0.0091  & 0.0121  & 0.0121  & 0.0121  & 0.0011  & 0.0000  & 0.0000  \\
    G47   & 1000  & 0.0125  & 0.0103  & 0.0100  & 0.0125  & 0.0125  & 0.0122  & 0.0125  & 0.0103  & 0.0100  \\
    G51   & 1000  & 0.0527  & 0.0440  & 0.0427  & 0.0228  & 0.0241  & 0.0239  & 0.0016  & 0.0004  & 0.0000  \\
    G52   & 1000  & 0.0485  & 0.0406  & 0.0391  & 0.0203  & 0.0239  & 0.0235  & 0.0485  & 0.0406  & 0.0391  \\
    G53   & 1000  & 0.0544  & 0.0462  & 0.0453  & 0.0212  & 0.0242  & 0.0245  & 0.0031  & 0.0000  & 0.0000  \\
    G54   & 1000  & 0.0616  & 0.0544  & 0.0520  & 0.0215  & 0.0241  & 0.0235  & 0.0017  & 0.0000  & 0.0000  \\
    \hline
\end{tabular}%
\end{adjustbox}%
 \end{table}

\clearpage
 
 \begin{table}
\centering
\begin{adjustbox}{addcode={\begin{minipage}{\width}}{ 
\caption{\small  Quality check: Cut values for $27$ problem instances in G-set obtained from $100$ runs of the simple iterative (SI) algorithm \eqref{iter1c} with $p=1,2,\infty$. 
Each run starts from the maximal eigenvector of the graph Laplacian and undergoes $T=2000$ iterations. 
The minimum, mean and maximum cut values are recorded and
compared to {the reference ones} obtained by combinatorial algorithms \cite{MaHao2017},
while the initial cut values are listed in the second column. 
The ratios between the best cut values (in italics) by SI and {the reference values}
are at least  $0.986$ (see G36).}
\label{tab1}\end{minipage} },angle=90,center}
\begin{tabular}{|r|r|rrr|rrr|rrr|c|}
\hline

\multirow{2}*{\makecell{graph}} & \multirow{2}*{\makecell{initial}} &\multicolumn{3}{r|}{\makecell{$p=1$}}& \multicolumn{3}{r|}{\makecell{$p=2$}}& \multicolumn{3}{r|}{\makecell{$p=\infty$}}&  \multirow{2}*{\makecell{{reference}}}\\
\cline{3-11}
    &&min & mean& max& min & mean& max&min & mean& max& \\
\hline
    G1    & 11221 & 11477 & 11524.52 & 11554 & 11505 & 11527.27 & 11548 & 11499 & 11523.3 & 11553 & 11624 \\
    G2    & 11283 & 11483 & 11525.91 & 11583 & 11484 & 11505.6 & 11535 & 11473 & 11517.42 & 11557 & 11620 \\
    G3    & 11298 & 11497 & 11542.53 & 11599 & 11564 & 11586.14 & 11602 & 11557 & 11586.99 & 11605 & 11622 \\
    G4    & 11278 & 11520 & 11561.1 & 11597 & 11545 & 11564.49 & 11606 & 11539 & 11568.26 & 11598 & 11646 \\
    G5    & 11370 & 11530 & 11568.89 & 11602 & 11561 & 11579.89 & 11607 & 11558 & 11577.22 & 11602 & 11631 \\
    G14   & 2889  & 2998  & 3016.89 & 3036  & 3015  & 3022.74 & 3030  & 3012  & 3026.18 & 3033  & 3064 \\
    G15   & 2771  & 2968  & 2986.36 & 3005  & 2989  & 2995.56 & 3006  & 2985  & 2996.68 & 3008  & 3050 \\
    G16   & 2841  & 2973  & 2995.32 & 3009  & 3005  & 3010.14 & 3014  & 3005  & 3011.76 & 3017  & 3052 \\
    G17   & 2866  & 2963  & 2981.65 & 3004  & 2996  & 3002.27 & 3012  & 2995  & 3002.7 & 3010  & 3047 \\
    G22   & 12876 & 13198 & 13243.55 & 13302 & 13237 & 13267.7 & 13286 & 13248 & 13266.04 & 13290 & 13359 \\
    G23   & 12817 & 13173 & 13220.99 & 13267 & 13238 & 13252.85 & 13271 & 13232 & 13246.72 & 13264 & 13344 \\
    G24   & 12826 & 13194 & 13221.16 & 13244 & 13235 & 13260.34 & 13295 & 13233 & 13262.4 & 13287 & 13337 \\
    G25   & 12781 & 13155 & 13189.61 & 13245 & 13185 & 13221 & 13240 & 13187 & 13213.86 & 13238 & 13340 \\
    G26   & 12752 & 13140 & 13176.81 & 13222 & 13185 & 13201.6 & 13221 & 13183 & 13197.21 & 13210 & 13328 \\
    G35   & 7194  & 7512  & 7540.39 & 7572  & 7558  & 7576.68 & 7595  & 7560  & 7575.4 & 7586  & 7687 \\
    G36   & 7124  & 7502  & 7529.77 & 7557  & 7535  & 7548.22 & 7566  & 7531  & 7553.36 & 7576  & 7680 \\
    G37   & 7162  & 7505  & 7541.03 & 7563  & 7565  & 7581.08 & 7603  & 7575  & 7590.59 & 7602  & 7691 \\
    G38   & 7122  & 7513  & 7542.91 & 7568  & 7561  & 7573.63 & 7589  & 7544  & 7565.93 & 7593  & 7688 \\
    G43   & 6395  & 6570  & 6609.44 & 6635  & 6599  & 6625.47 & 6645  & 6604  & 6625.8 & 6644  & 6660 \\
    G44   & 6439  & 6575  & 6598.42 & 6618  & 6593  & 6608.63 & 6617  & 6590  & 6609.27 & 6621  & 6650 \\
    G45   & 6364  & 6574  & 6599.49 & 6624  & 6587  & 6595.39 & 6612  & 6586  & 6593.07 & 6599  & 6654 \\
    G46   & 6389  & 6557  & 6586.68 & 6612  & 6562  & 6583.31 & 6597  & 6569  & 6583.93 & 6602  & 6649 \\
    G47   & 6353  & 6552  & 6583.5 & 6614  & 6584  & 6598.8 & 6609  & 6584  & 6597.24 & 6610  & 6657 \\
    G51   & 3645  & 3769  & 3787.88 & 3810  & 3788  & 3797.76 & 3811  & 3785  & 3794.12 & 3806  & 3848 \\
    G52   & 3645  & 3766  & 3787.31 & 3807  & 3808  & 3811.55 & 3816  & 3806  & 3811.85 & 3820  & 3851 \\
    G53   & 3630  & 3778  & 3793.98 & 3812  & 3795  & 3804.37 & 3818  & 3797  & 3808.45 & 3817  & 3850 \\
    G54   & 3655  & 3767  & 3789.34 & 3805  & 3800  & 3809.05 & 3818  & 3808  & 3813.69 & 3821  & 3852 \\
    \hline
\end{tabular}
\end{adjustbox}

\end{table}

\clearpage

\begin{figure}[h]
  \centering
 \caption{\small Quality check: Histogram of the ratios for all $2700\times 3$ runs depicted in Tab.~\ref{tab1} 
 (More explanations are referred to Tab.~\ref{tab1}). 
 The percent of runs obtained a ratio larger than $0.980$, which lie on the right of the black vertical line,  
 exceeds $95\%$. }
 \label{fig2}
 \includegraphics[scale=0.25]{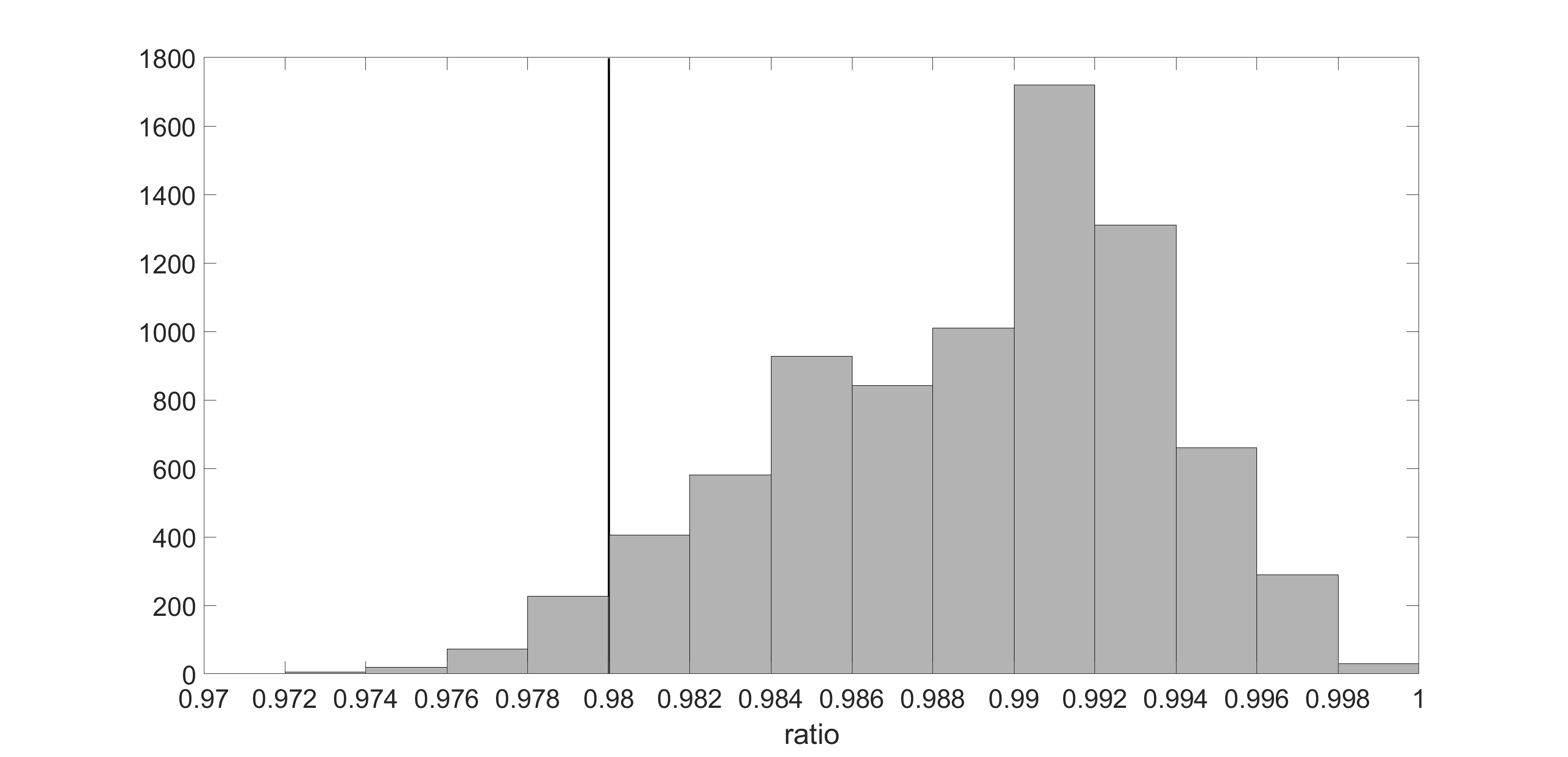}
\end{figure}

\clearpage

\begin{algorithm}
\caption{\small Pseudo-code of the simple iteration with perturbation (SI-P).}
\begin{algorithmic}[1]
\Require initial $\vec x^0$, $L$, $T$
\Ensure $count$, $\vec x^{count}$, $r^{count}$
\State  $r^0\gets 0$, $count\gets 0$
\While{True}\label{alg:whiles}
\For{$ipert=1$ to $L$}
\State $\beta$: choosen from $(0,1)$ randomly \label{beta_random}
\State $(\tilde{\vec x}^{ipert},\tilde{r}^{ipert})\gets \Call{si\_perturb}{\vec x^{count},T,t,\beta}$
\EndFor
\State $ipert\gets\argmax_{ipert\in \{1,2,\ldots,L\}}{\tilde{r}^{ipert}}$\label{alg:loops}
\State $count\gets count+1$\label{code:count}
\If{$\tilde{r}^{ipert}>r^{count-1}$} \label{break0}
\State $\vec x^{count}\gets \tilde{\vec x}^{ipert}$
\State $r^{count}\gets \tilde{r}^{ipert}$
\Else 
\State break 
\EndIf \label{alg:loope}

\EndWhile\label{alg:whilee}
\Function {($\vec x^{opt},r^{opt}$)=si\_perturb}{$\vec x^0,T,t,\beta$}\label{alg:ips}
\State initial $r^0$, $\vec q^0$, $\vec p^0$, $\sigma^0$, $\vec s^0$
\State $k \gets 0$
\State $r^{opt}\gets 0$
\State $\bar{\vec p}^k\gets \Call{p\_bar}{\vec x^k,\vec q^k,\vec p^k}$
\While{$k<T$}
\State $\vec x^{k+1}\gets \Call{exact\_solution}{\vec s^k,r^k}$
\State $r^{k+1} \gets r^k+\delta_F(\vec x^{k+1},\vec x^k)$
\If{$r^{k+1}>r^{opt}$}
\State $r^{opt}\gets r^{k+1}$
\State $\vec x^{opt}\gets \vec x^{k+1}$
\EndIf
\If{$r^{k+1}=r^k=\cdots =r^{k-t}$} \label{trigger0}
\State $\vec x^{k+1}\gets \Call{perturb}{\bar{\vec p}^k,\beta}$\label{alg:pertur}
\State $r^{k+1}\gets r^k+\delta_F(\vec x^{k+1},\vec x^k)$
\EndIf \label{trigger1}
\State $\vec q^{k+1} \gets \vec q^k +\delta_{\vec q}(\vec x^{k+1},\vec x^k)$
\State $\vec p^{k+1} \gets \vec p^k + \delta_{\vec p}(\vec x^{k+1},\vec x^k)$
\State $\bar{\vec p}^{k+1}\gets \Call{p\_bar}{\vec x^{k+1},\vec q^{k+1},\vec p^{k+1}}$
\State $(\vec s^{k+1},\sigma^{k+1}) \gets \Call{subgradient}{\vec x^{k+1},\bar{\vec p}^{k+1},\vec s^{k},\sigma^{k}}$
\State $k\gets k+1$
\EndWhile
\EndFunction\label{alg:ipe}
\end{algorithmic}
\label{alg:ipwp}
\end{algorithm}

\clearpage

\begin{table}
  \centering
\caption{\small  Improved cut values achieved by the simple iteration with perturbation (SI-P) depicted in Alg.~\ref{alg:ipwp}
with $t=3$, $L=20$, $T=2000$, and $p=\infty$. The ratios between the best cut values by SI-P and {the reference ones 
by combinatorial algorithms} \cite{MaHao2017} are at least  $0.997$ (see G37).  
The number of turns of outer loop, recorded by $count$ in Line~\ref{code:count} of Alg.~\ref{alg:ipwp},  is at most  $17$, thereby meaning that the SI-P algorithm runs 
at most  $count\times L\times T=17\times 20 \times 2000 = 680000$ iterations.
}
      \small
\setlength{\tabcolsep}{2.5mm}{
\begin{tabular}{|l|c|c|c|c|c|}
\hline
    \multirow{2}*{\makecell{graph}}  &\multirow{2}*{\makecell{{reference}}}  & \multirow{2}*{\makecell{result}}  &\multirow{2}*{\makecell{ratio}} &\multirow{2}*{\makecell{ratio without \\perturbation}}&\multirow{2}*{\makecell{$count$}} \\
    &&&&&\\
    \hline
    G1    & 11624 & 11624 & 1.0000  & 0.9939  & 9 \\
    G2    & 11620 & 11620 & 1.0000  & 0.9946  & 4 \\
    G3    & 11622 & 11622 & 1.0000  & 0.9985  & 3 \\
    G4    & 11646 & 11646 & 1.0000  & 0.9959  & 17 \\
    G5    & 11631 & 11630 & 0.9999  & 0.9975  & 9 \\
    G14   & 3064  & 3063  & 0.9997  & 0.9899  & 5 \\
    G15   & 3050  & 3050  & 1.0000  & 0.9862  & 12 \\
    G16   & 3052  & 3052  & 1.0000  & 0.9885  & 6 \\
    G17   & 3047  & 3046  & 0.9997  & 0.9879  & 14 \\
    G22   & 13359 & 13358 & 0.9999  & 0.9948  & 8 \\
    G23   & 13344 & 13339 & 0.9996  & 0.9940  & 7 \\
    G24   & 13337 & 13335 & 0.9999  & 0.9963  & 7 \\
    G25   & 13340 & 13337 & 0.9998  & 0.9924  & 4 \\
    G26   & 13328 & 13318 & 0.9992  & 0.9911  & 3 \\
    G35   & 7687  & 7663  & 0.9969  & 0.9869  & 11 \\
    G36   & 7680  & 7656  & 0.9969  & 0.9865  & 13 \\
    G37   & 7691  & 7665  & 0.9966  & 0.9884  & 7 \\
    G38   & 7688  & 7673  & 0.9980  & 0.9876  & 12 \\
    G43   & 6660  & 6660  & 1.0000  & 0.9976  & 2 \\
    G44   & 6650  & 6650  & 1.0000  & 0.9956  & 4 \\
    G45   & 6654  & 6654  & 1.0000  & 0.9917  & 2 \\
    G46   & 6649  & 6646  & 0.9995  & 0.9929  & 5 \\
    G47   & 6657  & 6657  & 1.0000  & 0.9929  & 3 \\
    G51   & 3848  & 3841  & 0.9982  & 0.9891  & 4 \\
    G52   & 3851  & 3849  & 0.9995  & 0.9920  & 8 \\
    G53   & 3850  & 3846  & 0.9990  & 0.9914  & 10 \\
    G54   & 3852  & 3845  & 0.9982  & 0.9920  & 9 \\
    \hline
    \end{tabular}%
    }
  \label{tab:adl}
\end{table}%

\clearpage

\begin{figure}[h]
  \centering
 \caption{\small Comparison with CirCut: The minimum, mean, and maximum cut values (normalized by the number of edges $|E|$) produced by CirCut0, SI,
and CirCut0+SI from multiple starting points are displayed in (a), while those obtained by CirCut, SI$\_$PERTURB,
and CirCut+SI are presented in (b). CirCut0 refers to the pure rank-two relaxation which consists of the line-search and Procedure-CUT, only the first two steps of Alg.~1 in \cite{BurerMonteiroZhang2001}. CirCut0+SI means the output of CirCut0 serves as the input to SI for possible solution quality improvements and so does CirCut+SI.  }
 \label{fig3}
 \subfigure[SI {\it vs} CirCut0.]{\includegraphics[scale=0.35]{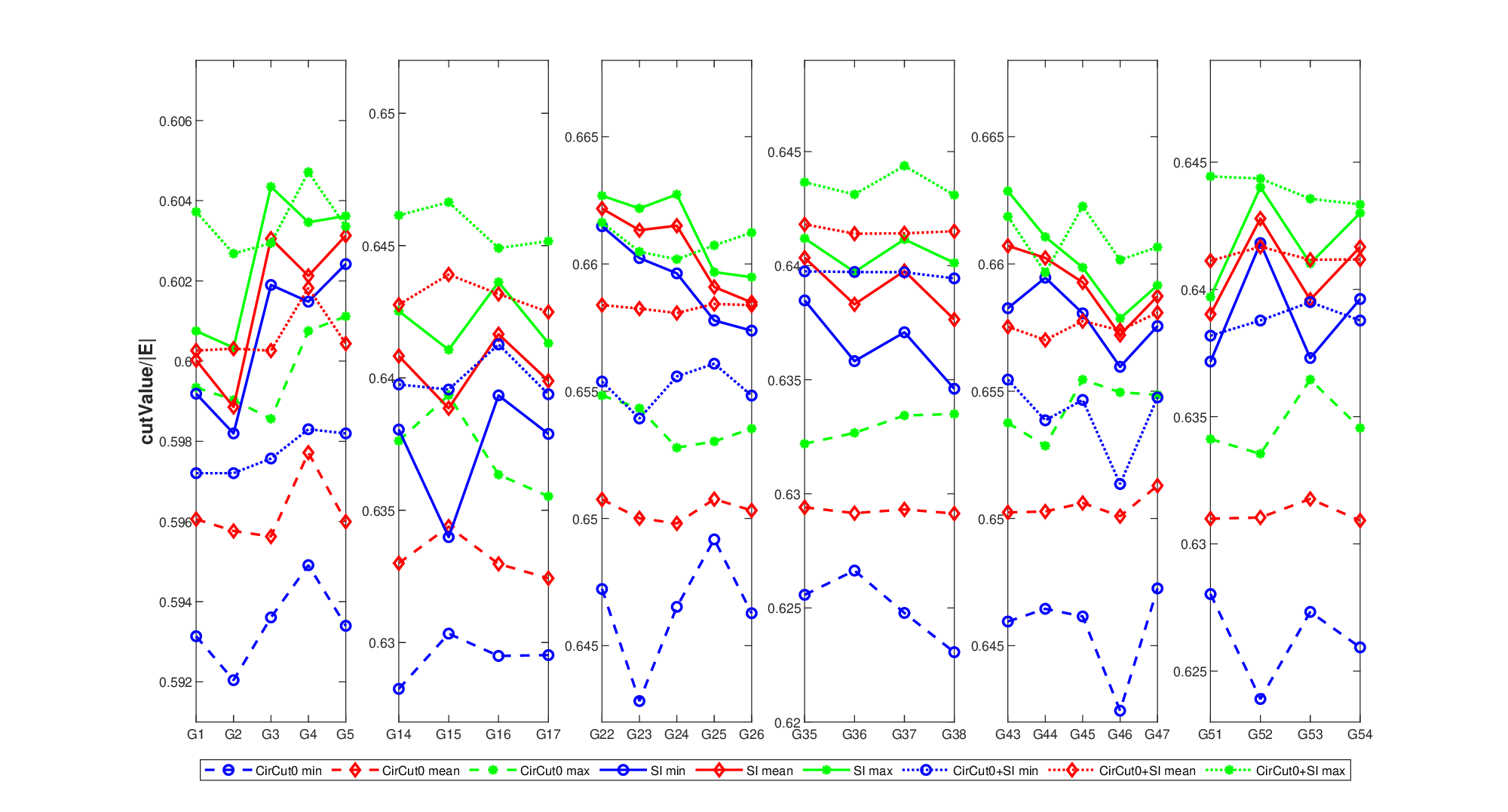}}
 \subfigure[SI$\_$PERTURB {\it vs} CirCut.]{\includegraphics[scale=0.35]{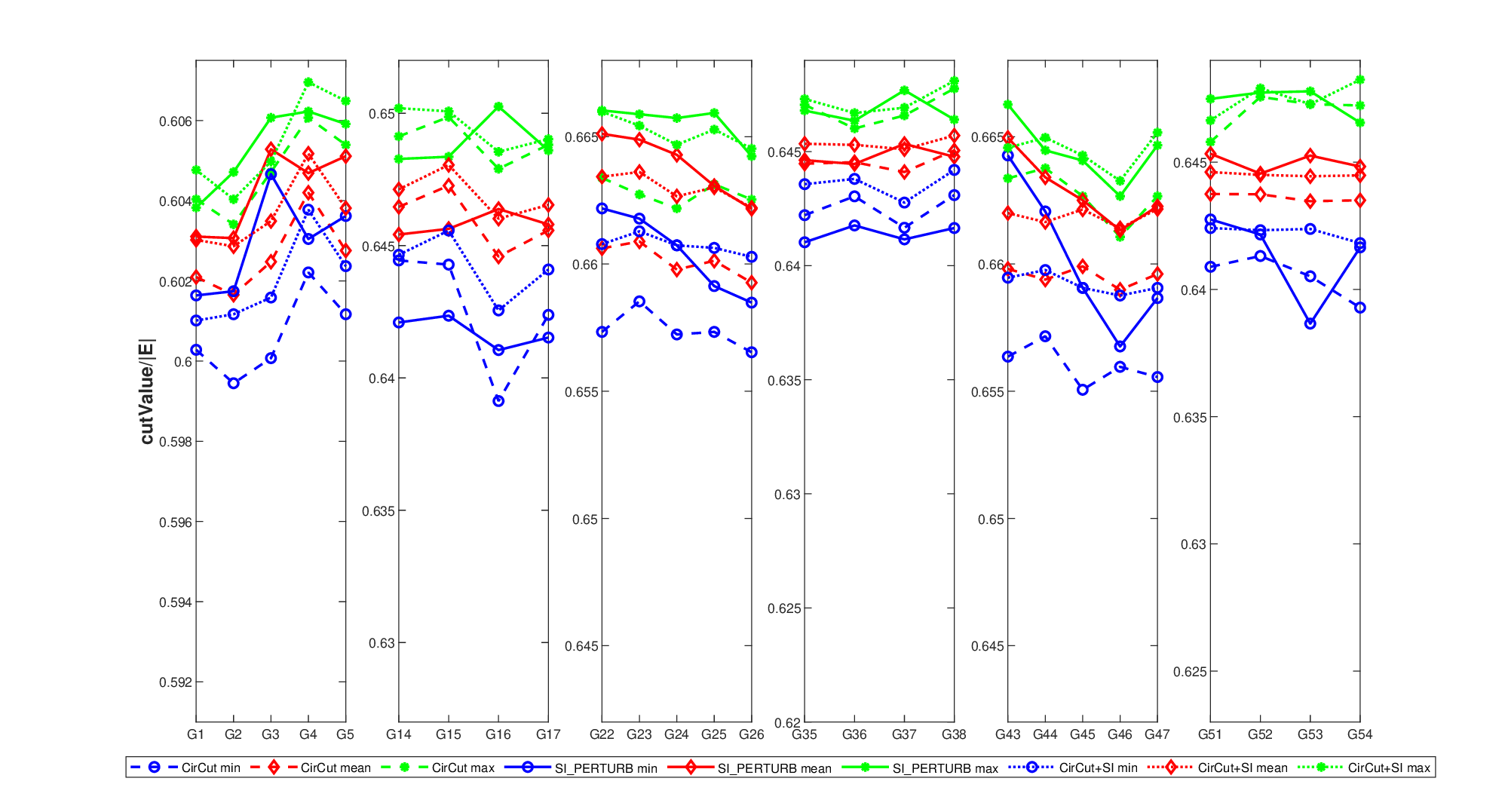}}
\end{figure}

\clearpage

\begin{table}
  \centering
\caption{\small Comparison with CirCut: \blue{The wall-clock time in seconds}, the numbers of calculating $f(\theta)$ and its gradient $\bar{T}_f$ and  $\bar{T}_g$ in CirCut0 and CirCut,  the number of iterations $\bar{T}_S$ in SI and SI$\_$PERTURB,
and the same number of perturbations $\bar{T}_P$ in CirCut and SI$\_$PERTURB. 
It is clearly shown that SI and SI$\_$PERTURB require much fewer iterations and \blue{thus run much faster} than CirCut0 and CirCut, respectively. CirCut0 refers to the pure rank-two relaxation which only contains the first two steps of Alg.~1 in \cite{BurerMonteiroZhang2001}. }
      \small
\setlength{\tabcolsep}{1mm}{
\begin{tabular}{|c|ccc|cc|ccc|ccc|}
\hline
\multirow{3}{*}{graph} & \multicolumn{3}{c|}{CirCut0}                                                  & \multicolumn{2}{c|}{SI}                    & \multicolumn{3}{c|}{CirCut}                                                   & \multicolumn{3}{c|}{SI$\_$PERTURB}                                            \\ \cline{2-12} 
                   & \multicolumn{1}{c|}{\multirow{2}{*}{\blue{time}}   } & \multicolumn{1}{c|}{\multirow{2}{*}{$\bar{T}_g$}} & \multicolumn{1}{c|}{\multirow{2}{*}{$\bar{T}_f$}}  &\multicolumn{1}{c|}{ \multirow{2}{*}{\blue{time}}}& \multicolumn{1}{c|}{\multirow{2}{*}{$\bar{T}_s$}} & \multicolumn{1}{c|}{\multirow{2}{*}{\blue{time}}} & \multicolumn{1}{c|}{\multirow{2}{*}{$\bar{T}_g$}} & \multicolumn{1}{c|}{\multirow{2}{*}{$\bar{T}_f$}} & \multicolumn{1}{c|}{\multirow{2}{*}{\blue{time}}} & \multicolumn{1}{c|}{\multirow{2}{*}{$\bar{T}_s$}}  & \multirow{2}{*}{$\bar{T}_p$} \\ 
                   & \multicolumn{1}{c|}{}                             &        \multicolumn{1}{c|}{}                                     &        \multicolumn{1}{c|}{}                                 &                                                 \multicolumn{1}{c|}{}                             & \multicolumn{1}{c|}{}          &     \multicolumn{1}{c|}{}                                          &  \multicolumn{1}{c|}{}                                                        & \multicolumn{1}{c|}{}            &  \multicolumn{1}{c|}{}                                           &   \multicolumn{1}{c|}{}            &\multicolumn{1}{c|}{}                                                                       \\     
                       \hline
G1                     & \multicolumn{1}{c|}{1.73}                         & \multicolumn{1}{c|}{74.45}                        & 126                      & \multicolumn{1}{c|}{0.34}                         & 32.65                    & \multicolumn{1}{c|}{39.37}                        & \multicolumn{1}{c|}{1589.05}                      & 2817.9                   & \multicolumn{1}{c|}{0.61}                         & \multicolumn{1}{c|}{643.55}                   & 30                           \\
G2                     & \multicolumn{1}{c|}{1.76}                         & \multicolumn{1}{c|}{74.75}                        & 125.55                   & \multicolumn{1}{c|}{0.37}                         & 33.15                    & \multicolumn{1}{c|}{38.33}                        & \multicolumn{1}{c|}{1544}                         & 2763.15                  & \multicolumn{1}{c|}{0.55}                         & \multicolumn{1}{c|}{652.9}                    & 30                           \\
G3                     & \multicolumn{1}{c|}{1.75}                         & \multicolumn{1}{c|}{73.85}                        & 124.35                   & \multicolumn{1}{c|}{0.31}                         & 32.1                     & \multicolumn{1}{c|}{41.82}                        & \multicolumn{1}{c|}{1745.25}                      & 3103.15                  & \multicolumn{1}{c|}{0.50}                         & \multicolumn{1}{c|}{637.25}                   & 33                           \\
G4                     & \multicolumn{1}{c|}{2.09}                         & \multicolumn{1}{c|}{92.95}                        & 156.85                   & \multicolumn{1}{c|}{0.32}                         & 34.65                    & \multicolumn{1}{c|}{37.95}                        & \multicolumn{1}{c|}{1585.1}                       & 2788.6                   & \multicolumn{1}{c|}{0.54}                         & \multicolumn{1}{c|}{617.7}                    & 30                           \\
G5                     & \multicolumn{1}{c|}{1.88}                         & \multicolumn{1}{c|}{77}                           & 130                      & \multicolumn{1}{c|}{0.33}                         & 29                       & \multicolumn{1}{c|}{39.18}                        & \multicolumn{1}{c|}{1657.25}                      & 2909.15                  & \multicolumn{1}{c|}{0.56}                         & \multicolumn{1}{c|}{677.9}                    & 31                           \\
G14                    & \multicolumn{1}{c|}{1.09}                         & \multicolumn{1}{c|}{75}                           & 118.3                    & \multicolumn{1}{c|}{0.10}                         & 24.15                    & \multicolumn{1}{c|}{27.09}                        & \multicolumn{1}{c|}{2070.7}                       & 3183.85                  & \multicolumn{1}{c|}{0.60}                         & \multicolumn{1}{c|}{791.1}                    & 34                           \\
G15                    & \multicolumn{1}{c|}{1.09}                         & \multicolumn{1}{c|}{75.5}                         & 118                      & \multicolumn{1}{c|}{0.11}                         & 28.25                    & \multicolumn{1}{c|}{26.14}                        & \multicolumn{1}{c|}{2038.9}                       & 3137.1                   & \multicolumn{1}{c|}{0.58}                         & \multicolumn{1}{c|}{780.15}                   & 32                           \\
G16                    & \multicolumn{1}{c|}{1.17}                         & \multicolumn{1}{c|}{81.4}                         & 127.5                    & \multicolumn{1}{c|}{0.11}                         & 25                       & \multicolumn{1}{c|}{22.18}                        & \multicolumn{1}{c|}{1695.25}                      & 2628.1                   & \multicolumn{1}{c|}{0.46}                         & \multicolumn{1}{c|}{660.8}                    & 28                           \\
G17                    & \multicolumn{1}{c|}{1.00}                         & \multicolumn{1}{c|}{76.25}                        & 119.35                   & \multicolumn{1}{c|}{0.10}                         & 23.3                     & \multicolumn{1}{c|}{22.75}                        & \multicolumn{1}{c|}{1752}                         & 2694.95                  & \multicolumn{1}{c|}{0.45}                         & \multicolumn{1}{c|}{658.35}                   & 29                           \\
G22                    & \multicolumn{1}{c|}{4.26}                         & \multicolumn{1}{c|}{65.6}                         & 101.9                    & \multicolumn{1}{c|}{0.90}                         & 31.05                    & \multicolumn{1}{c|}{90.58}                        & \multicolumn{1}{c|}{1344.4}                       & 2076.35                  & \multicolumn{1}{c|}{3.81}                         & \multicolumn{1}{c|}{801.15}                   & 31                           \\
G23                    & \multicolumn{1}{c|}{3.51}                         & \multicolumn{1}{c|}{54.4}                         & 84.7                     & \multicolumn{1}{c|}{0.91}                         & 31.25                    & \multicolumn{1}{c|}{104.28}                       & \multicolumn{1}{c|}{1538.15}                      & 2395.05                  & \multicolumn{1}{c|}{3.74}                         & \multicolumn{1}{c|}{795.7}                    & 34                           \\
G24                    & \multicolumn{1}{c|}{3.94}                         & \multicolumn{1}{c|}{62.35}                        & 96.3                     & \multicolumn{1}{c|}{0.91}                         & 32.9                     & \multicolumn{1}{c|}{94.22}                        & \multicolumn{1}{c|}{1404.1}                       & 2175.95                  & \multicolumn{1}{c|}{3.69}                         & \multicolumn{1}{c|}{805.25}                   & 31                           \\
G25                    & \multicolumn{1}{c|}{3.95}                         & \multicolumn{1}{c|}{63}                           & 97.7                     & \multicolumn{1}{c|}{0.90}                         & 32.45                    & \multicolumn{1}{c|}{87.42}                        & \multicolumn{1}{c|}{1297.55}                      & 2007.25                  & \multicolumn{1}{c|}{3.50}                         & \multicolumn{1}{c|}{732.9}                    & 29                           \\
G26                    & \multicolumn{1}{c|}{4.02}                         & \multicolumn{1}{c|}{63.8}                         & 99.9                     & \multicolumn{1}{c|}{0.86}                         & 32.45                    & \multicolumn{1}{c|}{83.02}                        & \multicolumn{1}{c|}{1244.25}                      & 1916.05                  & \multicolumn{1}{c|}{3.64}                         & \multicolumn{1}{c|}{750.6}                    & 28                           \\
G35                    & \multicolumn{1}{c|}{4.50}                         & \multicolumn{1}{c|}{86.15}                        & 136.7                    & \multicolumn{1}{c|}{0.58}                         & 27.6                     & \multicolumn{1}{c|}{155.32}                       & \multicolumn{1}{c|}{2968.25}                      & 4606.1                   & \multicolumn{1}{c|}{5.57}                         & \multicolumn{1}{c|}{1020}                     & 40                           \\
G36                    & \multicolumn{1}{c|}{4.28}                         & \multicolumn{1}{c|}{83.6}                         & 131.2                    & \multicolumn{1}{c|}{0.64}                         & 30.45                    & \multicolumn{1}{c|}{176.06}                       & \multicolumn{1}{c|}{3405.65}                      & 5276.25                  & \multicolumn{1}{c|}{6.67}                         & \multicolumn{1}{c|}{1072.1}                   & 42                           \\
G37                    & \multicolumn{1}{c|}{4.67}                         & \multicolumn{1}{c|}{88.15}                        & 141.5                    & \multicolumn{1}{c|}{0.64}                         & 31.1                     & \multicolumn{1}{c|}{165.54}                       & \multicolumn{1}{c|}{3206.85}                      & 4959.4                   & \multicolumn{1}{c|}{6.44}                         & \multicolumn{1}{c|}{1042.5}                   & 41                           \\
G38                    & \multicolumn{1}{c|}{4.87}                         & \multicolumn{1}{c|}{92.75}                        & 145.7                    & \multicolumn{1}{c|}{0.64}                         & 29.9                     & \multicolumn{1}{c|}{164.64}                       & \multicolumn{1}{c|}{3217.9}                       & 4957.6                   & \multicolumn{1}{c|}{7.35}                         & \multicolumn{1}{c|}{1170.75}                  & 43                           \\
G43                    & \multicolumn{1}{c|}{1.39}                         & \multicolumn{1}{c|}{62.55}                        & 97.45                    & \multicolumn{1}{c|}{0.24}                         & 33.35                    & \multicolumn{1}{c|}{29.69}                        & \multicolumn{1}{c|}{1275.85}                      & 1998.3                   & \multicolumn{1}{c|}{0.62}                         & \multicolumn{1}{c|}{598.75}                   & 29                           \\
G44                    & \multicolumn{1}{c|}{1.41}                         & \multicolumn{1}{c|}{63.25}                        & 99.1                     & \multicolumn{1}{c|}{0.24}                         & 29.9                     & \multicolumn{1}{c|}{34.89}                        & \multicolumn{1}{c|}{1517.6}                       & 2402.25                  & \multicolumn{1}{c|}{0.72}                         & \multicolumn{1}{c|}{746.2}                    & 34                           \\
G45                    & \multicolumn{1}{c|}{1.47}                         & \multicolumn{1}{c|}{65.55}                        & 102.6                    & \multicolumn{1}{c|}{0.24}                         & 28.2                     & \multicolumn{1}{c|}{33.33}                        & \multicolumn{1}{c|}{1458.75}                      & 2319.2                   & \multicolumn{1}{c|}{0.69}                         & \multicolumn{1}{c|}{739.2}                    & 32                           \\
G46                    & \multicolumn{1}{c|}{1.37}                         & \multicolumn{1}{c|}{60.95}                        & 95.15                    & \multicolumn{1}{c|}{0.24}                         & 27.35                    & \multicolumn{1}{c|}{29.63}                        & \multicolumn{1}{c|}{1262.55}                      & 2002.3                   & \multicolumn{1}{c|}{0.55}                         & \multicolumn{1}{c|}{606.75}                   & 28                           \\
G47                    & \multicolumn{1}{c|}{1.37}                         & \multicolumn{1}{c|}{64.25}                        & 99.95                    & \multicolumn{1}{c|}{0.23}                         & 27.55                    & \multicolumn{1}{c|}{25.59}                        & \multicolumn{1}{c|}{1127.9}                       & 1777.85                  & \multicolumn{1}{c|}{0.58}                         & \multicolumn{1}{c|}{624.6}                    & 25                           \\
G51                    & \multicolumn{1}{c|}{1.56}                         & \multicolumn{1}{c|}{82.5}                         & 129.95                   & \multicolumn{1}{c|}{0.15}                         & 25.1                     & \multicolumn{1}{c|}{41.71}                        & \multicolumn{1}{c|}{2281.45}                      & 3516.85                  & \multicolumn{1}{c|}{1.02}                         & \multicolumn{1}{c|}{793.35}                   & 35                           \\
G52                    & \multicolumn{1}{c|}{1.50}                         & \multicolumn{1}{c|}{79.85}                        & 125.55                   & \multicolumn{1}{c|}{0.15}                         & 24.3                     & \multicolumn{1}{c|}{45.90}                        & \multicolumn{1}{c|}{2506.95}                      & 3875                     & \multicolumn{1}{c|}{1.22}                         & \multicolumn{1}{c|}{922.45}                   & 38                           \\
G53                    & \multicolumn{1}{c|}{1.66}                         & \multicolumn{1}{c|}{88.85}                        & 139.55                   & \multicolumn{1}{c|}{0.17}                         & 25.4                     & \multicolumn{1}{c|}{36.32}                        & \multicolumn{1}{c|}{1959.1}                       & 3017.85                  & \multicolumn{1}{c|}{0.93}                         & \multicolumn{1}{c|}{753.15}                   & 32                           \\
G54                    & \multicolumn{1}{c|}{1.49}                         & \multicolumn{1}{c|}{79.65}                        & 124.25                   & \multicolumn{1}{c|}{0.15}                         & 23.25                    & \multicolumn{1}{c|}{35.04}                        & \multicolumn{1}{c|}{1873.75}                      & 2885.05                  & \multicolumn{1}{c|}{0.88}                         & \multicolumn{1}{c|}{690.9}                    & 30                           \\   \hline
\end{tabular}
 }
  \label{tab:circutcost}\end{table}%

\clearpage


          \end{document}